\def\R{\mathbb R}
\def\X{\mathcal X}
\def\pa{\partial}
\def\Om{\Omega}
\def\cal{\mathcal}
\def\div{{\rm div}\,}
\def\vol{{\rm vol}\,}
\def\<{\langle}
\def\>{\rangle}
\def\dist{{\rm dist}\,}
\def\cF{{\cal F}} 
\DeclareMathSymbol{\leqslant}{\mathalpha}{AMSa}{"36} 
\DeclareMathSymbol{\geqslant}{\mathalpha}{AMSa}{"3E} 
\DeclareMathSymbol{\eset}{\mathalpha}{AMSb}{"3F}     
\renewcommand{\leq}{\;\leqslant\;}                   
\renewcommand{\geq}{\;\geqslant\;}                   
\newcounter{mycount}
\newcommand{\cG}{\ensuremath{\mathcal G}}
\newcommand{\cL}{\ensuremath{\mathcal L}}
\newcommand{\gga}{\gamma}            
\newcommand{\gd}{\delta}
\newcommand{\gep}{\varepsilon}       
\newcommand{\gp}{\varphi}
\newcommand{\gl}{\lambda}
\newcommand{\gL}{\Lambda}
\newcommand{\gs}{\sigma}
\newcommand{\bbD}{{\ensuremath{\mathbb D}} }
\newcommand{\bbE}{{\ensuremath{\mathbb E}} }
\newcommand{\bbN}{{\ensuremath{\mathbb N}} }
\newcommand{\bbP}{{\ensuremath{\mathbb P}} }
\newcommand{\bbR}{{\ensuremath{\mathbb R}} }
\newcommand\dt{{\frac{\mathrm d}{\mathrm dt}}}
\newcommand{\dd}{{\, \mathrm d}}
\newcommand{\vs}{\vspace{0.3cm}}
\newcommand{\ds}{\displaystyle}
\newcommand{\sk}{\smallskip}
\newcommand{\mk}{\medskip}
\newcommand{\bk}{\bigskip}
\let\oldmarginpar\marginpar
\renewcommand\marginpar[1]{\-\oldmarginpar[\raggedleft\footnotesize #1]%
{\raggedright\footnotesize #1}}
\title[Boundary-driven nonlinear drift-diffusions]{Lyapunov
  functionals for boundary-driven nonlinear drift-diffusions}
\author{T. Bodineau, J.L. Lebowitz, C. Mouhot \& C.  Villani}
\def\signtb{\bigskip \begin{center} {\sc Thierry Bodineau
\par\vspace{3mm}
\'ENS Paris \& CNRS\par
DMA, UMR CNRS 8553\par
45 rue d'Ulm 
F 75320 Paris cedex 05 
FRANCE\par\vspace{3mm}
e-mail:} \tt{Thierry.Bodineau@ens.fr} \end{center}}
\def\signjl{\bigskip \begin{center} {\sc Joel Lebowitz\par\vspace{3mm}
Rutgers University\par
Departments of Mathematics and Physics\\par
110 Frelinghuysen Road
Piscataway, NJ 08854, USA\par\vspace{3mm}
e-mail:} \tt{lebowitz@math.rutgers.edu} \end{center}}
\def\signcv{\bigskip \begin{center} {\sc C\'edric
      Villani\par\vspace{3mm} Universit\'e Lyon 1, Institut Henri
      Poincar\'e \& IUF\par
      UMPA, UMR CNRS 5669\par
      46 all\'ee d'Italie 69364 Lyon Cedex 07 FRANCE\par\vspace{3mm}
      e-mail:} \tt{cvillani@umpa.ens-lyon.fr} \end{center}}
\def\signcm{\bigskip \begin{center} {\sc 
Cl\'ement Mouhot\par\vspace{3mm}
University of Cambridge\par
DPMMS, Centre for Mathematical Sciences\par
Wilberforce Road, 
Cambridge CB3 0WA, 
UK\par\vspace{3mm}
e-mail:} \tt{C.Mouhot@dpmms.cam.ac.uk} \end{center}}
\begin{document}
\maketitle                            

\newcommand{\p}{\partial}
\newcommand{\og}{\omega}
\newcommand{\Og}{\Omega}
\newcommand{\Dt}{\Delta}
\newcommand{\ld}{\lambda}
\newcommand{\Ld}{\Lambda}
\newcommand{\Gm}{\Gamma}
\newcommand{\gm}{\gamma}
\newcommand{\vp}{\varphi}
\newcommand{\vep}{\varepsilon}
\newcommand{\ep}{\epsilon}
\newcommand{\vh}{\varrho}
\newcommand{\vap}{\varphi}
\newcommand{\kp}{\eta}
\newcommand{\Sg}{\Sigma}
\newcommand{\fr}{\frac}
\newcommand{\sg}{\sigma}
\newcommand{\ept}{\emptyset}
\newcommand{\btd}{\nabla}
\newcommand{\btu}{\bigtriangleup}
\newcommand{\tg}{\triangle}
\newcommand{\Th}{{\cal T}^h}
\newcommand{\ul}{\underline}
\newcommand{\Ups}{\Upsilon}
\newcommand{\be}{\begin{equation}}
\newcommand{\ee}{\end{equation}}
\newcommand{\ba}{\begin{array}}
\newcommand{\ea}{\end{array}}
\newcommand{\bea}{\begin{eqnarray}}
\newcommand{\eea}{\end{eqnarray}}
\newcommand{\beas}{\begin{eqnarray*}}
\newcommand{\eeas}{\end{eqnarray*}}
\newcommand{\dpm}{\displaystyle }
\newcommand{\intt}{\int\!\!\!\!\int}
\newcommand{\inttt}{\int\!\!\!\!\int\!\!\!\!\int}
\newcommand{\intttt}{\int\!\!\!\!\int\!\!\!\!\int\!\!\!\!\int}
\newcommand{\bR}{{\bf R}^3 }
\newcommand{\bS}{{\bf S}^2 }
\newcommand{\bSS}{{\bS}\times{\bS}}
\newcommand{\bSSS}{{\bS}\times{\bS}\times{\bS}}
\newcommand{\bRR}{{\bR}\times{\bR}}
\newcommand{\bRRR}{{\bRR}\times{\bR}}
\newcommand{\bRS}{{\bR}\times {\bf S}^2 }
\newcommand{\bRRS}{{\bRR}\times{\bf S}^2 }
\newcommand{\bRRRS}{{\bRRR}\times{\bf S}^2 }
\newcommand{\bRP}{{\bf  R}_{+}}
\newcommand{\bRPP}{{\bf  R}_{+}^2}
\newcommand{\bRPPP}{{\bf  R}_{+}^3}
\newcommand{\bRPPPP}{{\bf  R}_{+}^4}
\newcommand{\bT}{{\bf T}^3}
\newcommand{\bTS}{{\bf T}^3\times{\bS}}
\newcommand{\bTR}{{\bT}\times{\bR}}
\newcommand{\bTRR}{{\bT}\times{\bRR}}
\newcommand{\bTRRR}{{\bT}\times{\bRRR}}
\newcommand{\bTRRS}{{\bT}\times{\bRRS}}
\newcommand{\bTRL}{{\bf R}^1\times{\bT}}
\newcommand{\ra}{\rangle}
\newcommand{\rsk}{s, {\rm k}}
\newcommand{\bLRK}{{\bf R}^1\times{\bf K}^3}
\newcommand{\rip}{\rightharpoonup}
\newcommand{\bRN}{{\bf R}^N}
\newcommand{\bRRN}{{\bf R}^N\times{\bf R}^N}
\newcommand{\bRRSN}{{\bf R}^N\times{\bf R}^N\times{\bf S}^{N-1}}
\newcommand{\bRRRN}{{\bf R}^N\times{\bf R}^N\times{\bf R}^N}
\newcommand{\bRRRRN}
{{\bf R}^N\times{\bf R}^N\times{\bf R}^N\times{\bf R}^N}
\newcommand{\bSN}{{\bf S}^{N-1}}
\newcommand{\bRSN}{{\bf R}^N\times{\bf S}^{N-1}}
\newcommand{\bSSN}{{\bf S}^{N-1}\times{\bf S}^{N-1}}

\newtheorem{theorem}{Theorem}
\newtheorem{definition}[theorem]{Definition}
\newtheorem{lemma}[theorem]{Lemma}
\newtheorem{conjecture}[theorem]{Conjecture}
\newtheorem{corollary}[theorem]{Corollary}
\newtheorem{proposition}[theorem]{Proposition}

\def\theThm{{\arabic{section}.\arabic{theorem}}}
\numberwithin{equation}{section}
\numberwithin{theorem}{section}

\theoremstyle{remark}
\newtheorem{remark}[theorem]{Remark}
\newtheorem{remarks}[theorem]{Remarks}
\newtheorem{examples}[theorem]{Examples}
\newtheorem{example}[theorem]{Example}


\begin{abstract}
  We exhibit a large class of Lyapunov functionals for nonlinear
  drift-diffusion equations with non-homogeneous Dirichlet boundary
  conditions. 
  These are generalizations of large deviation functionals
  for  underlying stochastic many-particle systems, the zero range
  process and the Ginzburg-Landau dynamics, which we describe briefly. 
  As an application, we prove linear inequalities between such an entropy-like functional and its
  entropy production functional for the boundary-driven porous medium
  equation in a bounded domain with positive Dirichlet conditions:
  this implies exponential rates of relaxation related to the first
  Dirichlet eigenvalue of the domain. We also 
  derive Lyapunov functions for systems of nonlinear diffusion equations, and for
  nonlinear Markov processes with non-reversible stationary measures.
\end{abstract} 
\bk

\textbf{Keywords:} Nonlinear diffusion equation; scattering equation;
diffusion process; jump process; pointwise nonlinearity; zero range
process; entropy; relative entropy; $\Phi$-entropy; Markov processes;
nonlinear Markov processes; logarithmic-Sobolev inequality; Dirichlet eigenvalues.  \sk

\textbf{Mathematics Subject Classification (2000):} 58J65, 60J60,
60K35, 28D20. 

\tableofcontents

\section {Introduction}
\label{sec:intro}

\subsection{The setting} 

We consider a diffusion operator on an open subset $\Om$ in a
Riemannian manifold $M$ ($\Om$ may be equal to $M$): Let $A$ be 
a linear map $T_xM\to T_x^*M$, $x \in \Omega$. We 
assume that the associated quadratic form $v\longmapsto \<Av,v\>$ is
nonnegative (on each tangent space), and refer to it as a
\emph{diffusion matrix}. We consider a \emph{field} $E : x \in M \to
T_x M$ and 
define the associated drift-diffusion operator $\cL=\cL_{A,E}$ by
\[ 
\cL_{A,E} \mu := -\nabla^*  (A\nabla\mu + E \mu) \quad \mbox{ for } \quad \mu \
\mbox{ a measure on} \ \Om.
\] 
Here $\nabla^*$ denotes the adjoint operator (i.e. $-\mbox{div}$ in
the case of a flat geometry) and $\nabla\mu$ is defined in the weak
sense; if $\mu = h\vol$ then $\nabla\mu = (\nabla h)\,\vol$ by usual
Riemannian calculus ($\vol$ is the canonical volume measure on $M$).
We shall always assume that $\mu$ has a smooth density, although this
assumption can usually be removed by approximation arguments. 

We then consider a non-negative function $\sigma : \R_+ \to \R_+$ that
is monotonically increasing on $\R_+$, and a measure $\nu$ on
$\Omega$. We define the corresponding \emph{nonlinear drift-diffusion
  equation}
\begin{equation}\label{eq:ibv1}
  \frac{\partial \mu}{\partial t} = \cL_{A,E} \left( \sigma(f) \nu \right), \quad x \in
  \Omega, \quad f = \frac{\dd \mu}{\dd \nu}.
\end{equation}
In the simplest case where $\nu$ is the Lebesgue measure and $A$ the
identity, this is just an equation for the density $f$:
\begin{equation*}
  \frac{\partial f}{\partial t} = \Delta \sigma (f) + \text{div} \big( E \sigma (f) \big). 
\end{equation*}
Note that the diffusion coefficient associated to this equation is
$\sigma^\prime$ and is non-negative as $\sigma$ is assumed to be
monotonically increasing.  In particular for $\sigma(x) = x^m$, we
recover the porous medium equation.

The equation is supplemented with initial conditions and boundary
conditions of \emph{non-homogeneous} Dirichlet type: 
\begin{equation}
  \label{eq:ibv2}
  f_{|t=0} = f_0, \quad f_{|\partial \Om} = f_b
\end{equation}
for some $f_0 \ge 0$ on $x \in \Omega$, and $f_b>0$ on
$x \in \partial \Omega$. This represents an open system in contact at
its boundary with reservoirs having specified density values $f_b(x)$ for $x$ in $\partial \Om$. 

\begin{remark}
  All our results still hold in the simpler case of \emph{generalized}
  Neumann boundary conditions (that takes into account the force field
  $E$)
\begin{equation}\label{ipp}
  \forall g \in C_b(\Omega), \quad \int_\Om g\, \cL_{A,E} \left(
    \sigma(f) \nu \right)  = \int_\Om \<A\nabla\left( \sigma(f)
    \nu\right) + E \sigma(f) \nu ,\nabla g\>,
\end{equation}
that formally is the same as 
\[ \<A\nabla\left( \sigma(f) \nu \right) + E \sigma(f) \nu , n\>=0
\qquad \text{on $\pa\Om$},\] where $n=n(x)$ is the outward normal to
$\Om$. When $E=0$, this reduces to the classical Neumann boundary
conditions. We also recover previous results in the case of
\emph{homogeneous} Dirichlet boundary conditions when $f_b$ is
constant.
\end{remark}

We finally assume that a stationary measure $\dd \mu_\infty = f_\infty
\dd \nu$ exists: 
\begin{equation}
  \label{eq:mu-stat}
  \cL_{A,E} \left( \sigma(f_\infty) \nu\right) = 0.
\end{equation}
Note that in general the measure $\sigma(f_\infty) \nu$ is not
\emph{reversible}: 
\[
A \nabla (\sigma(f_\infty) \nu) + E \sigma(f_\infty) \nu \not \equiv 0,
\]
and $f_\infty$ is not explicit and depends on the boundary conditions
in a non-local manner. This is a manifestation of the open nature of
this system.

\subsection{The main results}
\label{sec:main-results}

Let us introduce, for any $C^2$ function $\Phi : \R_+ \to \R_+$ that
satisfies $\Phi(1)=\Phi'(1)=0$ and $\Phi'' \ge 0$, the following
functional
\begin{equation} \label{eq:Hphi}
  H_\Phi (f|f_\infty) := \int_\Om \left( \int_{f_\infty(x)} ^{f(x)}
    \Phi'\left(\frac{\sigma(s)}{\sigma(f_\infty(x))} \right) \dd s\right) \dd
    \nu(x). 
\end{equation}
Clearly $\Phi(z) \ge 0$ for $z \in \R_+$ with equality at $z=1$, and
$\Phi'(z) \le 0$ for $z \in [0,1]$ and $\Phi'(z) \ge 0$ for $z \in
[1,+\infty)$. Due to the fact that $\sigma$ is monotonically
increasing, the functional $H_\Phi$ is therefore non-negative.

\begin{remark}
Observe that when $\sigma(s) =s$, this functional reduces to 
\begin{equation*}
  H_\Phi(f|f_\infty) = \int_\Om \Phi\left( \frac{f}{f_\infty} \right)
  f_\infty \dd \nu = \int_\Om \Phi\left( \frac{\dd \mu}{\dd
      \mu_\infty} \right) \dd \mu_\infty.
\end{equation*}
When $\Phi(z) = z \ln z-z+1$, one recognises the
\emph{Kullback-Leibler information}
\begin{equation*}
  \int_\Om \dd \left( \mu \ln \frac{\dd \mu}{\dd
      \mu_\infty} - \mu + \mu_\infty \right)
\end{equation*}
which differs from Shannon's relative entropy $\int_\Om \dd ( \mu \ln
\frac{\dd \mu}{\dd \mu_\infty} )$ when $\mu$ and $\mu_\infty$ have
different masses. For lack of a better name, we will use the name
``\emph{relative $\Phi$-entropy}'' for the functional $H_\Phi$. Note
that our definition of entropy has the opposite sign from that used in
statistical mechanics or thermodynamics -- so it is generally
decreasing rather than increasing in time. 
\end{remark}

Let us also introduce, for any $C^2$ function $\Psi : \R \to \R_+$ that
satisfies $\Psi'(0)=0$ and $\Psi'' \ge 0$, the following functional
\begin{equation} \label{eq:Hpsi}
  N_\Psi (f|f_\infty) := \int_\Om \left( \int_{f_\infty(x)} ^{f(x)}
    \Psi'\left(\sigma(s) - \sigma(f_\infty(x)) \right) \dd s\right) \dd
    \nu(x). 
\end{equation}
Clearly $\Psi'(z) \ge 0$ for $z \ge 0$ and $\Psi'(z) \le 0$ for $z \le
0$. Due to the fact that $\sigma$ is monotonically increasing, the
functional $N_\Psi$ is therefore non-negative again.

\begin{remark}
Observe that when $\sigma(s) =s$, this functional reduces to 
\begin{equation*}
  N_\Psi(f|f_\infty) = \int_\Om \Psi\left( f - f_\infty \right)
  \dd \nu = \int_\Om \Psi\left( \frac{\dd \mu}{\dd \nu} - \frac{\dd
      \mu_\infty}{\dd \nu} \right) \dd \nu.
\end{equation*}
When $\Psi(z) = z^p$, $p > 1$, one recognises some kind of Lebesgue
norm
\begin{equation*}
  \int_\Om \left| f - f_\infty \right|^p \dd \nu
\end{equation*}
with respect to the stationary measure. Again for lack of a better
name, we will use the name ``\emph{relative $\Psi$-entrophy}'' for
the functional $N_\Psi$.
\end{remark}

\begin{theorem}
  \label{theo:main1}
  Under the previous assumptions, for any solutions $f_t= \dd \mu_t/
  \dd \nu$ in $L^\infty$ to the nonlinear drift-diffusion equation
  \eqref{eq:ibv1}-\eqref{eq:ibv2}, one has in the sense of
  distribution
  \begin{equation*}
        \dt H_\Phi(f_t|f_\infty) = - \int_\Om \Phi''(h) \left\langle A
      \nabla h, \nabla h \right\rangle \sigma(f_\infty) \dd \nu \le 0, 
    \quad h := \frac{\sigma(f)}{\sigma(f_\infty)} 
  \end{equation*}
  
Moreover if $\nu$ is reversible: $A \nabla \nu + E \nu \equiv 0$, then
\begin{equation*} 
\dt N_\Psi(f_t|f_\infty) = - \int_\Om \Psi''(g) \left\langle A
      \nabla g, \nabla g \right\rangle \dd \nu \le 0, 
    \quad g := \sigma(f) - \sigma(f_\infty).
\end{equation*}
\end{theorem}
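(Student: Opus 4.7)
The plan is to differentiate $H_\Phi$ and $N_\Psi$ along the flow, convert the time derivative into a flux integral by integration by parts, and decompose the resulting flux around the stationary profile to extract the asserted quadratic forms. The genuine subtlety is that the stationary flux $J_\infty := A\nabla(\sigma(f_\infty)\nu) + E\sigma(f_\infty)\nu$ need not vanish (the measure $\sigma(f_\infty)\nu$ is not reversible), so the decomposition produces a non-gradient cross term that must be shown to integrate to zero.

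\textbf{Step 1 (time derivative).} Since $f_\infty$ is time-independent, differentiating under the integral in \eqref{eq:Hphi}--\eqref{eq:Hpsi} and using the fundamental theorem of calculus gives $\partial_f H_\Phi = \Phi'(h)$ and $\partial_f N_\Psi = \Psi'(g)$, whence
\[
\dt H_\Phi = \int_\Om \Phi'(h)\,\partial_t f\,\dd\nu, \qquad \dt N_\Psi = \int_\Om \Psi'(g)\,\partial_t f\,\dd\nu.
\]
Inserting \eqref{eq:ibv1} with flux $J := A\nabla(\sigma(f)\nu) + E\sigma(f)\nu$ and integrating by parts yields
\[
\dt H_\Phi = -\int_\Om \Phi''(h)\langle \nabla h, J\rangle + \int_{\pa\Om}\Phi'(h)\langle J, n\rangle,
\]
and analogously for $N_\Psi$ with $g,\Psi$ replacing $h,\Phi$. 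Under the Dirichlet condition \eqref{eq:ibv2} we have $h|_{\pa\Om}=1$ and $g|_{\pa\Om}=0$, and the hypotheses $\Phi'(1)=0$, $\Psi'(0)=0$ annihilate both boundary integrals (the Neumann form \eqref{ipp} suppresses them by construction).

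\textbf{Step 2 (flux decomposition).} By stationarity \eqref{eq:mu-stat}, $\nabla^* J_\infty = 0$. Writing $\sigma(f) = h\,\sigma(f_\infty)$ and applying the Leibniz rule yields
\[
J = \sigma(f_\infty)\,(A\nabla h)\,\nu + h\,J_\infty;
\]
for the $N_\Psi$ case, writing instead $\sigma(f) = \sigma(f_\infty) + g$ and using reversibility $A\nabla\nu + E\nu \equiv 0$ to kill the $g(A\nabla\nu + E\nu)$ piece coming from Leibniz gives
\[
J = J_\infty + (A\nabla g)\,\nu.
\]
Substituted into Step 1 these produce the desired quadratic forms together with remainders $\int h\Phi''(h)\langle\nabla h, J_\infty\rangle$ and $\int \Psi''(g)\langle\nabla g, J_\infty\rangle$ respectively.

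\textbf{Step 3 (cross-term vanishes).} Each remainder is of the form $\int\langle\nabla F, J_\infty\rangle$ for a scalar primitive $F$: take $F$ with $F'(h) = h\Phi''(h)$ and $F(1) = 0$ in the first case, and $F = \Psi'$ (already satisfying $F(0) = 0$) in the second. Integration by parts combined with $\nabla^* J_\infty = 0$ reduces each remainder to a boundary integral $\int_{\pa\Om} F\langle J_\infty, n\rangle$, which vanishes because $F$ is zero on $\pa\Om$ (where $h=1$ or $g=0$). Collecting gives the two displayed identities; non-positivity is then immediate from $A\ge 0$ together with $\Phi'',\Psi''\ge 0$. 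The algebraic fact that $h\Phi''(h)\nabla h$ is an exact gradient whose primitive can be normalized to vanish at $h=1$ is precisely what dictates the specific integrand chosen in \eqref{eq:Hphi}, and is the one non-routine ingredient in the whole argument.
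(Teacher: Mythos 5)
Your proof is correct and follows essentially the same route as the paper: differentiate under the integral, decompose the flux around the stationary flux $J_\infty = A\nabla(\sigma(f_\infty)\nu)+E\sigma(f_\infty)\nu$, and kill the cross term by combining $\nabla^* J_\infty=0$ with the boundary normalization of a suitable primitive. The only (cosmetic) difference is that you integrate by parts before decomposing, so your cross-term primitive is $z\Phi'(z)-\Phi(z)$ rather than the paper's $\Phi(z)$ --- both vanish at $z=1$ under the hypotheses $\Phi(1)=\Phi'(1)=0$.
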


\begin{remark} If $\Phi(z) = z \ln z -z +1$ and $\sigma(s) =
  s^m$ then 
\[ 
H_\Phi (f|f_\infty) = m \int_\Omega \left(f
  \ln\frac{f}{f_\infty} - f + f_\infty \right)\,\dd \nu =
m\, \int_\Omega \dd \left( \mu \ln \frac{\dd \mu}{\dd \mu_\infty} - \mu +
  \mu_\infty \right).\] (This is independent of the choice of $\nu$.)

If $\Phi(z) = (z-1)^2/2$ and $\sigma(s) = s^m$, then
$$
H_\Phi (f | f_\infty) = \int_{\Omega} \left( \frac{f^{m+1} - f_\infty
  ^{m+1}}{(m+1) f^{m+1} _\infty} - (f-f_\infty) \right) \, \dd \nu.
$$

If $\Psi(z) = z^2/2$, $\sigma(s) = s^m$ and $\nu$ reversible, then
\begin{equation}\label{eq:enstrophy-pme}
N_\Psi (f | f_\infty) = \int_{\Omega} \left( \frac{f^{m+1} - f_\infty
  ^{m+1}}{(m+1)} - (f-f_\infty) f_\infty^m \right) \, \dd \nu.
\end{equation}

Observe that the $\Phi$-entropies and their entropy production
functionals measure the distance between $f$ and $f_\infty$ through
quotients, whereas the $\Psi$-entrophies and their entropy production
functionals measure this distance through differences. Hence one can
expect that the relation between $\Phi$-entropies and their entropy
production functionals is connected to nonlinear inequalities of
logarithmic Sobolev or Beckner type, whereas the relation between
$\Psi$-entrophies and their entropy production functionals is
connected to \emph{linear} inequalities of Poincar\'e or spectral
theory type.
\end{remark}

\mk

Our second result is an application of these new entropies to the
study of the long-time behavior of the porous medium equation (PME) in
a bounded domain. We assume here that $\Omega$ is bounded, $\sigma(s)
= s^m$, $m \ge 1$, $A$ is the identity and $\nu = \vol$:
\begin{equation}\label{eq:pme}
  \frac{\partial f}{\partial t} = \Delta(f^m) \ \mbox{on} \ \Omega, \quad f_{|t=0} =
  f_0, \quad f_{|\partial \Omega} = f_b, \quad m \ge 1
\end{equation}
with $f_0 \ge 0$ on $\Omega$ and $f_b >0$ and bounded on $\partial \Omega$. 

\begin{theorem}
  \label{theo:main2}
  Consider the relative $\Psi$-entrophy \eqref{eq:enstrophy-pme} with
  $\nu = \vol$,
  then the solution $f_t$, $t \ge 0$ to \eqref{eq:pme} in $L^\infty$
  satisfies 
\begin{equation*}
  \dt N_\Psi(f_t|f_\infty) \le - \lambda N_\Psi(f_t|f_\infty), \quad
  N_\Psi(f_t|f_\infty) \le N_\Psi(f_0|f_\infty) 
  e^{-\lambda t}
\end{equation*}
for some constant $\lambda>0$ depending explicitely on $f_b$ through the bounds on
the invariant measure $f_\infty$ and the first Dirichlet eigenvalue of
the domain $\Omega$. This implies the convergence $f_t \to f_\infty$
in time with exponential rate.
\end{theorem}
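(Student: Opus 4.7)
The plan is to combine the dissipation identity of Theorem~\ref{theo:main1} with the Dirichlet Poincar\'e inequality on $\Omega$. In the setting of \eqref{eq:pme} one has $A = \Id$, $E = 0$ and $\nu = \vol$, which is trivially reversible, so Theorem~\ref{theo:main1} applied with $\Psi(z) = z^2/2$ yields
\[
  \dt N_\Psi(f_t | f_\infty) \;=\; -\int_\Omega |\nabla g|^2 \, \dd \vol, \qquad g := f^m - f_\infty^m.
\]
Thus the whole task reduces to proving a linear bound
\[
  N_\Psi(f|f_\infty) \;\le\; \frac{1}{\lambda}\int_\Omega |\nabla g|^2 \,\dd\vol
\]
for some $\lambda > 0$ depending only on $f_b$ and $\Omega$, after which Gronwall's lemma provides the claimed exponential convergence.

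The first key observation is that $g = f^m - f_\infty^m$ \emph{vanishes on} $\partial \Omega$, since both $f$ and $f_\infty$ take the value $f_b$ there. Hence the sharp Dirichlet--Poincar\'e inequality gives
\[
  \int_\Omega g^2 \,\dd\vol \;\le\; \frac{1}{\lambda_1(\Omega)}\int_\Omega |\nabla g|^2\,\dd\vol,
\]
where $\lambda_1(\Omega) > 0$ is the first Dirichlet eigenvalue of the Laplacian on $\Omega$. The second ingredient is a quantitative positivity bound on $f_\infty$: since $f_\infty^m$ is harmonic on $\Omega$ with boundary trace $f_b^m$, the weak maximum principle yields $c_- := \min_{\partial\Omega} f_b \le f_\infty \le \max_{\partial \Omega} f_b$ pointwise on $\Omega$, with $c_- > 0$ by the hypothesis $f_b > 0$.

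It then remains to compare $N_\Psi(f|f_\infty)$ pointwise with $g^2$. I claim that for all $a > 0$ and $b \ge 0$,
\[
  \int_a^b (s^m - a^m)\,\dd s \;\le\; a^{1-m}\,(b^m - a^m)^2.
\]
If $b \ge a$, the integrand is at most $b^m - a^m$, and the convexity of $s \mapsto s^m$ gives the tangent-line estimate $b - a \le (b^m - a^m)/(m a^{m-1})$, which yields the claim (with an even better constant $1/(m a^{m-1})$). If $0 \le b \le a$, one studies the function $\phi(t) := (1 - t)/(1 - t^m)$ at $t = b/a$: its derivative has the sign of $p(t) := -1 + m t^{m-1} - (m-1) t^m$, whose derivative $m(m-1)t^{m-2}(1-t)$ is nonnegative on $[0,1]$, so $p$ is non-decreasing with $p(1)=0$, hence $p \le 0$ on $[0,1]$; thus $\phi$ is non-increasing on $[0,1]$ with $\phi(0)=1$, which translates into $a - b \le a^{1-m}(a^m - b^m)$. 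Combining this with the pointwise bound $|s^m - a^m| \le a^m - b^m$ on $[b,a]$ gives the claim. Integrating over $\Omega$ and using $f_\infty \ge c_-$ (with $m \ge 1$, so $f_\infty^{1-m} \le c_-^{1-m}$) we conclude
\[
  N_\Psi(f|f_\infty) \;\le\; c_-^{1-m}\int_\Omega g^2 \,\dd\vol.
\]

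Assembling the three ingredients yields $\dt N_\Psi(f_t|f_\infty) \le -\lambda N_\Psi(f_t|f_\infty)$ with $\lambda = \lambda_1(\Omega)\, c_-^{m-1}$, and Gronwall's lemma concludes. The main obstacle is the pointwise inequality: near $f = f_\infty$ both sides are quadratic in $f - f_\infty$ with manifestly comparable prefactors, but one has to rule out deterioration of the ratio as $f(x) \to 0$ when $m > 1$, and this is precisely where the positivity hypothesis $f_b > 0$ is used in a quantitative and essentially sharp way through the strict lower bound $f_\infty \ge c_- > 0$.
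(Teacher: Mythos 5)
Your proof is correct and follows essentially the same route as the paper: the dissipation identity from Theorem~\ref{theo:main1}, the Dirichlet--Poincar\'e inequality applied to $g=f^m-f_\infty^m$ (which vanishes on $\partial\Omega$), and a pointwise comparison of the entrophy density with $g^2$ using the positive lower bound on $f_\infty$. The only difference is that you prove the elementary pointwise inequality completely, with the explicit constant $c_-^{m-1}$, where the paper merely sketches it by examining the regimes $x\sim 0$, $x\sim 1$, $x\sim+\infty$ of $x=y/y_\infty$; you also justify the bounds on $f_\infty$ via the maximum principle, which the paper takes for granted.
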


\subsection{Heuristic for the entropy structure}
\label{sec:heur-entr-struct}

Our heuristic motivation comes from noting that macroscopic equations
of the form \eqref{eq:ibv1} are meant to describe the coarse-grained
evolution of some underlying microscopic system. The time evolution of
the macroscopic variables evolving according to \eqref{eq:ibv1}
follows those of the microstates corresponding to almost sure values
of the appropriate microscopic functions for the time evolving measure
on the microstates. In particular the stationary values of the
macrovariables have full measure in the stationary state of the
microscopic system. It can then be shown, in cases where
\eqref{eq:ibv1} can be derived from microscopic models, that the large deviation functionals arising from microscopic dynamics are
Lyapunov functions for the macroscopic evolution equation associated
to the microscopic dynamics (cf. \cite{BDGJL,Maes}).  
We refer to Section~\ref{subsec:heuristic} for such a derivation in the case where
the underlying microscopic model evolves according to a simple
stochastic dynamics.

In particular in \cite{BDGJL}, the
relative entropy $H_\Phi$, for $\Phi=z \ln z - z +1$
$$
H_\Phi (f|f_\infty) := \int_\Om \left( \int_{f_\infty(x)} ^{f(x)}
    \ln \left(\frac{\sigma(s)}{\sigma(f_\infty(x))} \right) \dd s\right) \dd x 
$$
was identified with the large deviation functional of the zero range
process, but the focus was not on the PDE itself, but rather on the
stochastic particle system.  In \eqref{eq:Hphi}, we generalize this
functional to derive a new class of Lyapunov functionals for nonlinear PDE.
The functional \eqref{eq:Hpsi} can also be understood as a generalization of the 
large deviation functional associated to the Ginzburg-Landau process.
It is unclear to us whether these functionals are related to the large deviation
functionals arising from some other microscopic dynamics. We also
refer to \cite{MR2842966,Zimmer-bis} where the entropic gradient flow
structure is explored on the basis of large deviation principles for
linear diffusion equations on the real line (without boundary).

Using such simple models is, heuristically at least, justified by the
fact that the macroscopic equations are insensitive to many details of
the microscopic dynamics. This is fortunate since the derivation of
such macroscopic equations from the true microscopic dynamics, be it
Hamiltonian or quantum, is beyond our current mathematical abilities,
cf. \cite{GL,L}. However, even without a rigorous derivation, any
macroscopic equation describing the time evolution of a physical
system must obviously be consistent with the properties of the
microscopic dynamics. Thus, the properly defined macroscopic
(Boltzmann) entropy of an isolated system must be monotone. Isolated
for equation \eqref{eq:ibv1} means Neumann or periodic boundary
conditions, whereas for the nonlinear Boltzmann equation (appropriate
for low density gases) isolated means periodic or specular reflection
at the boundary.  In both examples the entropy Lyapunov functional is
the large deviation functional in the micro-canonical ensemble which
is stationary for the microscopic dynamics of the isolated system.

The existence of Lyapunov functions for the macroscopic equations can
be and has been, studied independently of any underlying microscopic
model.  It seems however desirable conceptually and, as we shall see
later, in some cases also practically to make the connection between
the large deviation functional for the microscopic dynamics and the
macroscopic evolution equations.  Entropy and large deviation
functionals play an important role in the understanding of the
macroscopic evolutions in addition to their intrinsic interest for
microscopic systems.  For references see \cite{GL,L}.

\subsection{Some references on nonlinear diffusions}
\label{sec:some-bibl-mater}

The starting point on the study of entropy structure for diffusion
equations is arguably the seminal papers of Gross \cite{gross} and
Bakry and \'Emery \cite{MR889476}, on the logarithmic Sobolev
inequality. Then the note \cite{MR1447044} exposed the method of
Bakry-\'Emery for proving logarithmic Sobolev inequalities to the
kinetic community. The paper \cite{MR1777035} used such functional
inequalities in order to study the porous medium equation in the whole
space (nonlinearity $\sigma(f) = f^m$ with $m >1$ and $\Omega =
\R^d$). Later the paper \cite{MR1986060} studied the fast diffusion
equation relaxation towards Barenblatt self-similar profiles in the
whole space (nonlinearity $\sigma(f) = f^m$ with $(d-2)/2 < m <
(d-1)/d$). The paper \cite{MR1853037} revisited the whole theory of
logarithmic Sobolev and Poincar\'e inequalities (including the
Holley-Stroock criterion for perturbated potentials), for a general
class of nonlinearity including fast diffusion and porous medium
equations. The paper \cite{MR1842428} treats the case of a convex
domain with Neuman boundary conditions by a penalization method (using
a limiting process with a ``barrier'' confining potential). We also
refer to \cite{MR2065020} for a review of ``entropy - entropy
production methods'' in kinetic theory. Let us also mention the paper
\cite{MR2152502} providing some refinements of Sobolev inequalities,
and the paper \cite{MR2435196} which provides a direct proof in the
case of Neuman boundary conditions (without penalization), but still
with the convexity assumption.
Entropy approaches have also been devised for reaction-diffusion-type systems
\cite{FFM}.

There is an enormous amount of valuable informations in the book of
Vazquez \cite{MR2286292}, centered on the porous medium case
($\sigma(s) = s^m$ with $m>1$). However as all previous works it is
mainly concerned with the whole space problem, and not so much with
entropy structures. The only chapters 19-20 which are concerned with
the asymptotic behavior of the initial-boundary-value problem are
restricted to {\em homogeneous} Dirichlet boundary conditions in a
bounded domain (zero) or in an exterior domain. We also refer to
\cite{brezis} where an entropy was presented for nonlinear diffusions
in bounded domain with homogeneous Dirichlet conditions. Hence it
seems that there are very few works in the PDE community in the case of
a bounded domain with non-homogeneous Dirichlet conditions. This
corresponds to $f_\infty$ describing and ``out-of-equilibrium'' steady
solution.

\subsection{Plan of the paper}

In Section \ref{sec:proof-theor-refth} we prove
Theorem~\ref{theo:main1} and in Section \ref{sec:proof-theor-refth-1}
we prove Theorem~\ref{theo:main2}. Then Section
\ref{sec:syst-nonl-diff} presents an extension of our approach to
systems of two nonlinear diffusion equations and derives  two Lyapunov
functionals in this setting. Section \ref{sec:rks} presents an
extension to linear or pointwise nonlinear Markov processes with
non-reversible stationary measures. Finally in Section
\ref{subsec:heuristic} we explain in details the heuristic that lead
us to introducing these relative entropies and entrophies, on the
basis of the zero range process and the Ginzburg-Landau dynamics.

\section{Proof of Theorem~\ref{theo:main1}}
\label{sec:proof-theor-refth}

Let us first consider the relative $\Phi$-entropy functional
$H_\Phi$. The time derivative is 
\begin{multline*}
  \dt H_\Phi(f|f_\infty) = \int_\Om \Phi'\left(
    \frac{\sigma(f_t(x))}{\sigma(f_\infty(x))} \right) \frac{\dd
    \cL_{A,E}(\sigma(f)
    \nu)}{\dd \nu} \dd \nu \\
  = - \int_\Om \Phi'\left( \frac{\sigma(f_t(x))}{\sigma(f_\infty(x))}
  \right) \nabla^* \left( A \nabla (\sigma(f) \nu) + E \sigma(f) \nu
  \right) \dd \vol.
\end{multline*}

We set $h = \sigma(f) / \sigma(f_\infty)$ and write (using
$\cL_{A,E}(\sigma(f_\infty)\nu)=0$)
\begin{multline*}
   \nabla^* \left( A \nabla (\sigma(f) \nu) + E \sigma(f) \nu
  \right)  =  \nabla^* \left( A \nabla (h \sigma(f_\infty)
  \nu) + E h \sigma(f_\infty) \nu \right) \\ = \nabla^* A \left( (\nabla h)
    \sigma(f_\infty) \nu \right) + \left\langle A
  \nabla(\sigma(f_\infty) \nu ) + E \sigma(f_\infty) \nu, \nabla h \right\rangle.
\end{multline*}

Since $h \equiv 1$ at the boundary $\partial \Omega$ and
$\Phi(1)=\Phi'(1)=0$ we obtain
\begin{multline*}
  \dt H_\Phi(f|f_\infty) = - \int_\Om \Phi'\left( h \right) \nabla^*
  \left( A
    \nabla (h \sigma(f_\infty) \nu) + E h \sigma(f_\infty) \nu \right) \dd \vol \\
  = - \int_\Om \Phi'\left( h \right) \nabla^* A \left( (\nabla h)
    \sigma(f_\infty) \nu \right) \dd \vol \\ - \int_\Om \Phi'\left( h
  \right) \left\langle A
    \nabla(\sigma(f_\infty) \nu) + E \sigma(f_\infty) \nu, \nabla h  \right\rangle \dd \vol \\
  = - \int_\Om \Phi''\left( h \right) \left\langle \nabla h, A \nabla
    h \right\rangle \sigma(f_\infty) \dd \nu \\
  - \int_\Om \left\langle A \nabla(\sigma(f_\infty) \nu) + E
    \sigma(f_\infty) \nu, \nabla \Phi(h) \right\rangle \dd
  \vol \\
  = - \int_\Om \Phi''\left( h \right) \left\langle \nabla h, A \nabla
    h \right\rangle \sigma(f_\infty) \dd \nu \\ - \int_\Om \Phi(h)
  \nabla^* \left( A \nabla(\sigma(f_\infty) \nu) + E \sigma(f_\infty)
    \nu \right)  \dd \vol \\
  = - \int_\Om \Phi''( h ) \left\langle \nabla h, A \nabla h
  \right\rangle \sigma(f_\infty) \dd \nu
\end{multline*}
which concludes the proof (we have used $\cL_{A,E}(\sigma(f_\infty) \nu)=0$ in
the two last lines). 

Let us next consider the relative $\Psi$-entrophy functional
$N_\Psi$. Its time derivative is 
\begin{eqnarray*}
  \dt N_\Psi(f|f_\infty) &=& \int_\Om \Psi'\left( \sigma(f_t(x)) -
    \sigma(f_\infty(x)) \right) \frac{\dd \cL_{A,E}(\sigma(f)
    \nu)}{\dd \nu} \dd \nu \\
  &=& - \int_\Om \Psi'\left( \sigma(f_t(x))- \sigma(f_\infty(x)) \right)
  \\ && \hspace{0.5cm}
  \nabla^* \left( A \nabla (\sigma(f) \nu - \sigma(f_\infty) \nu + E (
    \sigma(f) \nu - \sigma(f_\infty) \nu ) \right) \dd \vol
\end{eqnarray*}
where we have again used  $\cL_{A,E}(\sigma(f_\infty) \nu)=0$ in the last
line. 

We denote $g = \sigma(f) - \sigma(f_\infty)$ and since $g \equiv 0$ at
the boundary $\partial \Omega$ and $\Psi'(0)=0$ we obtain
\begin{multline*}
  \dt N_\Psi(f|f_\infty) = - \int_\Om \Psi'( g ) \nabla^* \left( A
  \nabla (g \nu) + E g \nu \right) \dd \vol \\
  = - \int_\Om \Psi''(g) \left\langle \nabla g, A \nabla g
  \right\rangle \dd \nu - \int_\Om \Psi'( g ) \nabla^* \left( g ( A
  \nabla \nu + E \nu) \right) \dd \vol \\
 = - \int_\Om \Psi''(g) \left\langle \nabla g, A \nabla g
  \right\rangle \dd \nu
\end{multline*}
which concludes the proof (we have used the reversibility of $\nu$ in
the two last lines).

\section{Proof of Theorem~\ref{theo:main2}}
\label{sec:proof-theor-refth-1}



We consider the porous medium equation \eqref{eq:pme} on $\Omega
\subset \R^d$ for $\sigma(s)=s^m$, $m \ge 1$ and $\nu = \vol = \dd x$,
and the relative $\Psi$-entrophy as constructed before with the
choice $\Psi(z) = z^2/2$ (the measure $\nu$ is reversible and
Theorem~\ref{theo:main1} applies). This results in
\begin{multline*}
  N_\Psi(f|f_\infty) = \int_\Omega \left( \int_{f_\infty} ^f \Psi'\left(
      s^m - f_\infty^m\right) \dd s \right) \dd x \\=
  \int_\Omega \left( \int_{f_\infty} ^f \left(
      s^m - f_\infty^m\right) \dd s \right) \dd x \\ 
  = \int_\Omega \left( \frac{f^{m+1} -
        f_\infty^{m+1}}{m+1} - (f-f_\infty) f_\infty^m \right) \dd x.
\end{multline*}

We have from the previous analysis
\begin{equation*}
  \dt N_\Psi(f_t|f_\infty) = - \int_\Omega \left| \nabla (f^m _t - f_\infty ^m)
  \right|^2 \dd x. 
\end{equation*}

We then observe that 
\begin{equation}\label{eq:dirichlet}
  \int_\Omega \left| \nabla g
  \right|^2 \dd x \ge \lambda_D \int_\Omega \left| g
  \right|^2 \dd x
\end{equation}
for any $g \in H^2 _0(\Omega)$ (the Sobolev space with zero boundary
conditions $g_{|\partial \Omega} =0$), where $\lambda_D >0$ is the
first Dirichlet eigenvalue of $\Omega$. We apply this inequality with
$g = f^m -f_\infty^m$.

Finally we perform the following elementary calculation in $\R$: for
any $y_\infty \in K \subset (0,+\infty)$ with $K$ compact, there is a
constant $C_K$ so that
\begin{equation*}
  \forall y \in \R_+, \quad 
  (y^m-y_\infty^m)^2 \ge C_K \left( \frac{y^{m+1} - y_\infty^{m+1}}{m+1} -
    (y-y_\infty) y_\infty^m \right),
\end{equation*}
(recall that $m \ge 1$).  This inequality is proved as follows: Let $x
= y/y_\infty$, then
\begin{equation*}
  \forall x \in \R_+, \quad  \left(1-x^m\right)^2 \ge \frac{C_K y_\infty ^{1-m}}{m+1} \left[
    x^{m+1} - (m+1) x +m \right]
\end{equation*}
by considering the three cases $x \sim +\infty$, $x \sim 1$, and $x
\sim 0$, and then taking the worst constant over $y_\infty \in
K$.

Since $f_\infty$ is valued in a compact subset of $(0,+\infty)$, we
finally deduce
\begin{equation*}
  \dt N_\Psi(f_t|f_\infty) \le - \lambda_D C_K N_\Psi(f_t|f_\infty) =: - \lambda N_\Psi(f_t|f_\infty)
\end{equation*}
which concludes the proof. 

\begin{remarks}
\begin{enumerate}
\item The key ingredient of this proof is
  equation~\eqref{eq:dirichlet}. The existence of the positive
  constant $\lambda_D$ captures the boundary-driven geometry of the
  problem through classical \emph{linear} spectral theories for
  self-adjoint operators with compact resolvent.
\item The underlying gradient-flow structure shows a typical
  \emph{hypocoercive} pattern, combining the sum of a partially coercive
  ``symmetric'' term and a skew-symmetric non-coercive term. The
  understanding of this structure is an interesting open question
  which will be studied in future  works.
\item Another interesting question is the following. In order to
  capture the role played by the boundary in a functional inequality,
  we have used our general framework of $\Psi$-entrophy in order to
  select an entropy producing an entropy production functional lending
  itself to a \emph{linear} study. However one could wonder whether
  this analysis could be performed for the $\Phi$-relative entropy
  with $\Phi(z) = z \ln z - z +1$. The key ingredient to be proved
  would then be a \emph{Dirichlet logarithmic Sobolev inequality}
  \begin{equation*}
    \int_\Omega \frac{|\nabla h|^2}{h} \dd m \ge \lambda_{DLSI}
    \int_\Omega \left( h \ln h - h +1 \right) \dd m
  \end{equation*}
  for $h \ge 0$ with $h_{|\partial \Omega} =1$ and $m$ a probability
  measure on $\Omega$, where $\lambda_{DSLI}>0$ depends on $\mu$. By
  comparison the usual logarithmic Sobolev inequality only yields
\begin{equation*}
  \int_\Omega \frac{|\nabla h|^2}{h} \dd m \ge \lambda_{LSI}
    \int_\Omega h \ln \frac{h}{\int_\Omega h \dd m} \dd m
  \end{equation*}
for some constant $\lambda_{LSI} >0$ depending on $m$. In the latter
inequality the integrand involves the non-local quantity $\int_\Omega
h \dd m$ which is non-constant along the flow of the nonlinear
diffusion equation. These two inequalities coincide when $\int_\Om h
\dd m =1$. 
\end{enumerate}
\end{remarks}

\section{System of nonlinear diffusion equations}
\label{sec:syst-nonl-diff}

In this section, we contruct a relative entropy and a relative
entrophy for a system of two nonlinear equations.

Let $\sigma^1, \sigma^2 : (\bbR^+)^2 \to (\R^+)^2$, so
that the Jacobian matrix of $(s_1,s_2) \mapsto (\sigma^1(s_1,s_2),\sigma^2(s_1,s_2))$ is
definite positive. 

We consider two diffusion operators $L_{A_1}$ and $L_{A_2}$ with two
diffusion matrices $A_1$ and $A_2$ as defined above, and for $\mu(t,x)
= (\mu^1(t,x),\mu^2(t,x))^T$ we define the following nonlinear system
of diffusion equations
\begin{equation}
\label{eq: coupled hydro}
\frac{\partial \mu}{\partial t} = \binom{L_{A_1}(\sigma^1(f)
  \nu)}{L_{A_2}(\sigma^2(f) \nu)}, \quad f=\binom{f^1}{f^2} =\binom{\dd
  \mu^1/ \dd \nu}{\dd \mu^2/ \dd \nu}, \quad x \in \Omega
\end{equation}
with the initial and boundary conditions
\[
f_{|t=0} = f_0 = \binom{f^1_0}{f^2_0},\quad f_{|\partial \Omega} =
\binom{f^1_b}{f^2_b}
\]
for some Borel functions $f_0 ^1, f_0 ^2 \ge 0$ on $\Om$ and $f_b ^1,
f_b ^2 >0$ on $\partial \Om$, and some reference measure $\nu$.  We
assume the existence of a stationary measure (solution to this
elliptic problem) $(\mu_\infty ^{1},\mu^2_\infty) =
(\sigma^{1}(f_\infty) \nu, \sigma(f_\infty) \nu)$, $f_\infty =
(f_\infty ^1, f_\infty ^2)^T$.

\mk

We first consider the case of the relative entropy. We assume that the
nonlinearity functionals satisfy the compatibility relation
\begin{equation}
\label{eq:symmetry1}
\forall s_1,s_2 \in \R_+, \quad 
\partial_2 \ln \sigma^1 (s_1,s_2) = \partial_1 \ln \sigma^2 (s_1,s_2),
\end{equation}
where $\partial_1, \partial_2$ stands for the derivative wrt the first
and second coordinates. These relations correspond, in the two
components zero range process, to a sufficient condition for the
stationary microscopic to be of the product form, see
Section~\ref{subsec:heuristic}. 

We consider $\Phi(z) = z\ln z - z +1$ and 
\begin{multline}
  H_\Phi(f|f_\infty) = \int_\Om \left( \int_{f_\infty ^1}^{f^1}
    \Phi'\left( \frac{\sigma^1(s,f^2)}{\sigma^1(f_\infty)}\right) \dd s  
+ \int_{f_\infty ^2}^{f^2} \Phi' \left( 
\frac{\sigma^2(f_\infty ^1,s)}{\sigma^2(f_\infty)}   \right) \dd s
\right) \dd \nu \\
= \int_\Om \left( \int_{f_\infty ^1}^{f^1}
    \ln \left( \frac{\sigma^1(s,f^2)}{\sigma^1(f_\infty)}\right) \dd s  
+ \int_{f_\infty ^2}^{f^2} \ln \left( 
\frac{\sigma^2(f_\infty ^1,s)}{\sigma^2(f_\infty)}   \right) \dd s
\right) \dd \nu.
\label{eq: 2 species LD}
\end{multline}

Let us denote
\begin{equation*}
  G_{f_\infty} (f^1,f^2) := \left( \int_{f_\infty ^1}^{f^1}
    \ln \left( \frac{\sigma^1(s,f^2)}{\sigma^1(f_\infty)}\right) \dd s  
+ \int_{f_\infty ^2}^{f^2} \ln \left( 
\frac{\sigma^2(f_\infty ^1,s)}{\sigma^2(f_\infty)}   \right) \dd s
\right). 
\end{equation*}
Observe that, thanks to the compatibility relation \eqref{eq:symmetry1} one has 
\begin{equation*}
\left\{
\begin{array}{l}\ds
  \partial_{f^1}  G_{f_\infty} (f^1,f^2) = \ln \frac{\sigma^1(f)}{\sigma^1(f_\infty)} \vs \\  \ds
\partial_{f^2}  G_{f_\infty} (f^1,f^2) = \ln \frac{\sigma^2(f)}{\sigma^2(f_\infty)}.
\end{array}
\right.
\end{equation*}


Hence we obtain, arguing similarly as for the one-component model
\begin{eqnarray*}
  \dt H_\Phi(f_t|f_\infty) &=& - \int_\Omega \frac{\langle A_1 \nabla
    h^1, \nabla h^1 \rangle}{h^1} \sigma^1 (f_\infty(x)) \dd\nu(x) \\
  && \qquad - \int_\Omega \frac{\langle A_2 \nabla
    h^2, \nabla h^2 \rangle}{h^2} \sigma^2 (f_\infty(x))
  \dd\nu(x) \\ 
&=&  - \int_\Omega \langle A_1 \nabla \ln 
    h^1, \nabla \ln h^1 \rangle \sigma^1 (f(x)) \dd\nu(x) \\
  && \qquad - \int_\Omega \langle A_2 \nabla
    \ln h^2, \nabla \ln h^2 \rangle \sigma^2 (f(x))
  \dd\nu(x) \le 0,
\end{eqnarray*}
where we have used the notation
\begin{equation*}
  h^1 := \frac{\sigma^1(f)}{\sigma^1(f_\infty)}, \quad   h^2 := \frac{\sigma^2(f)}{\sigma^2(f_\infty)}.
\end{equation*}

\begin{remark}
  In the case $A_1 = A_2 = \mbox{Identity}$, the diffusion matrix
  associated to the evolution is given by
\begin{equation*}
\bbD(s_1,s_2) = 
 \left(
 \begin{array}{cc}
 \partial_1 \sigma^1 (s_1,s_2) & \partial_2 \sigma^1 (s_1,s_2)\\
 \partial_1 \sigma^2 (s_1,s_2) & \partial_2 \sigma^2 (s_1,s_2)\\
 \end{array}
 \right) \, .
 \end{equation*}
and the conductivity matrix is 
\begin{equation*}
\mathbb S(s_1,s_2) = 
\left(
\begin{array}{cc}
\sigma^1 (s_1,s_2) & 0 \\
0 &  \sigma^2 (s_1,s_2)\\
\end{array}
\right) \, .
\end{equation*}
Then the Hessian matrix of $G_{f_\infty}(s_1,s_2)$ is 
\begin{equation*}
 \mathbb H(s_1,s_2) =  \left(
 \begin{array}{cc}
 \frac{\partial_1 \sigma^1 (s_1,s_2)}{\sigma^1(s_1,s_2)} & \frac{\partial_2
   \sigma^1 (s_1,s_2)}{\sigma^1(s_1,s_2)} \\
  \frac{\partial_1 \sigma^2 (s_1,s_2)}{\sigma^2(s_1,s_2)} & \frac{\partial_2
    \sigma^2 (s_1,s_2)}{\sigma^2(s_1,s_2)} \\
 \end{array}
 \right)
\end{equation*}
and we note that Einstein's relation is satisfied thanks to \eqref{eq:symmetry1}
\begin{equation*}
\bbD(s_1,s_2)  = \mathbb S(s_1,s_2) \; \mathbb H(s_1,s_2)  \, .
\end{equation*}
\end{remark}

\begin{remark}
  It is natural to ask whether this analysis extends to other
  relative $\Phi$-entropies, when $\Phi$ is different from $\Phi(z)=z
  \ln z -z +1$. The compatibility relations \eqref{eq:symmetry1}
  seem however hard to extend since it is only when $\Phi'(z)=\ln z$
  that the ``generalized'' condition
  \begin{equation*}
    \partial_2 \Phi'\left( \frac{ \sigma^1 (s_1,s_2)}{\sigma^1(f_\infty)}
    \right) = \partial_1 \Phi'\left( \frac{\sigma^2 (s_1,s_2)}{\sigma^2(f_\infty)}\right)
  \end{equation*}
becomes independent of the values of $f_\infty$ and therefore
makes sense.
\end{remark}
 \mk

 We next consider the case of the relative entrophy. We assume that
 the nonlinearity functionals satisfy the compatibility relations
\begin{equation}
  \label{eq:symmetry2}
\forall s_1,s_2 \in \R_+, \quad 
\partial_2 \sigma^1 (s_1,s_2) = \partial_1 \sigma^2 (s_1,s_2).
\end{equation}

We consider $\Psi(z) = z^2/2$ and $\nu$ reversible, and
\begin{multline*}
  N_\Psi(f|f_\infty) \\ = \int_\Om \left( \int_{f_\infty ^1}^{f^1}
    \Psi'\left( \sigma^1(s,f^2) - \sigma^1(f_\infty)\right) \dd s  
+ \int_{f_\infty ^2}^{f^2} \Psi' \left( 
\sigma^2(f_\infty ^1,s) - \sigma^2(f_\infty) \right) \dd s
\right) \dd \nu \\
= \int_\Om \left( \int_{f_\infty ^1}^{f^1}
  \left( \sigma^1(s,f^2) - \sigma^1(f_\infty)\right) \dd s  
+ \int_{f_\infty ^2}^{f^2} \left( 
\sigma^2(f_\infty ^1,s) - \sigma^2(f_\infty) \right) \dd s
\right) \dd \nu.
\end{multline*}

Let us denote
\begin{equation*}
  G_{f_\infty} (f^1,f^2) := \left( \int_{f_\infty ^1}^{f^1}
    \left( \sigma^1(s,f^2) - \sigma^1(f_\infty) \right) \dd s  
+ \int_{f_\infty ^2}^{f^2} \left( 
\sigma^2(f_\infty ^1,s) - \sigma^2(f_\infty) \right) \dd s
\right). 
\end{equation*}
Observe that, thanks to the compatibility relations
\eqref{eq:symmetry2}, one has
\begin{equation*}
\left\{
\begin{array}{l}\ds
  \partial_{f^1}  G_{f_\infty} (f^1,f^2) = \sigma^1(f) - \sigma^1(f_\infty) \vs \\  \ds
\partial_{f^2}  G_{f_\infty} (f^1,f^2) = \sigma^2(f) - \sigma^2(f_\infty).
\end{array}
\right.
\end{equation*}

Hence we obtain, arguing similarly as for the one-component model
\begin{equation*}
  \dt N_\Psi(f_t|f_\infty) = - \int_\Omega \langle A_1 \nabla g^1, g^1
  \rangle \dd\nu(x) 
 - \int_\Omega \langle A_2 \nabla g^2, g^2 \rangle \dd\nu(x) \le 0,
\end{equation*}
where we have used the notation
\begin{equation*}
  g^1 := \sigma^1(f) - \sigma^1(f_\infty), \quad g^2 := \sigma^2(f) -
  \sigma^2(f_\infty).
\end{equation*}

\begin{remark}
  It is again natural to ask whether this analysis extends to other
  relative $\Psi$-entrophy, when $\Psi$ is different from
  $\Psi(z)=z^2/2$. The compatibility condition \eqref{eq:symmetry2}
  seems however hard to extend since it is only when $\Psi'(z)=z$
  that the ``generalized'' condition
  \begin{equation*}
    \partial_2 \Psi'\left( \sigma^1 (x,y) - \sigma^1(f_\infty)
    \right) = \partial_1 \Psi'\left( \sigma^2
      (x,y) - \sigma^2(f_\infty) \right)
  \end{equation*}
becomes independent of the values of $f_\infty$ and therefore
makes sense.
\end{remark}

\begin{remark}
  A simple example of nonlinearity functionals satisfying both
  compatibility relations is: $\sigma^1(s_1,s_2) = \sigma^2(s_1,s_2) =
  \varphi(s_1+s_2)$, for some smooth function $\varphi$ on $\R_+$. In
  the particular case $\varphi(z) = z^m$, $m >0$, one can check that
  when $\Psi(z)=z^2/2$ and $\nu =\vol$, the relative entrophy is
  \begin{equation*}
    N_\Psi(f_t|f_\infty) = \int_\Om \left( \frac{\Sigma^{m+1} -
        \Sigma_\infty^{m+1}}{m+1} - (\Sigma-\Sigma_\infty) \Sigma_\infty ^m \right) \dd \vol
  \end{equation*}
  with $\Sigma=f^1_t+f^2_t$ and $\Sigma_\infty = f_\infty ^1 +
  f_\infty ^2$. When $f_\infty$ is positive and bounded and $m \ge 1$,
  it is straightforward to prove a linear inequality between the
  relative entrophy and its entrophy production in the same manner
  as in Theorem~\ref{theo:main2}:
\begin{equation*}
  \dt N_\Psi(f_t|f_\infty) \le - \lambda \, N_\Psi(f_t|f_\infty)
\end{equation*}
where $\lambda$ is related to the first Dirichlet eigenvalue of the
domain $\Omega$.
\end{remark}

\section{Nonlinear Markov processes}
\label{sec:rks}

In this section we consider a measure space $\X$ and a Markov process
defined by a kernel $K=K(y,\dd x)$, which is a measure on $\X$
depending measurably on $y\in\X$. To $K$ is associated an operator
$\cL=\cL_K$ acting on the space of probability measures on $\X$
defined by
\begin{equation}
  \label{LK} \cL_K\mu = 
  \int_{y \in \X}
  K(y,\dd x)\dd \mu(y) - \int_{y \in \X} K(x,\dd y)\dd \mu(x). 
\end{equation}
We assume that there are no problems of integrability, so that all integrals converge,
and Fubini theorem applies whenever needed. In particular,
\begin{equation}\label{Lmu}
  \int_\X \cL_K\mu = \iint_{\X
    \times \X}
  K(y,\dd x) \dd \mu(y) - \iint_{\X \times \X} K(x,\dd y)\dd \mu(x) =0.
\end{equation}

A probability measure $\nu$ is said to be \emph{$K$-stationary}
if $\cL_K\nu =0$.  It is said to be \emph{$K$-reversible} if
\[ 
K(x,\dd y) \dd \nu(x) = K(y,\dd x)\dd \nu(y).
\]
Of course reversibility implies stationarity but the reverse is not
true in general. Scattering operators with non-reversible stationary
measures can be used for modeling open systems. They resemble
nonlinear diffusion with non-homogeneous boundary conditions. In both
cases the invariant measure depends in a non-local manner on the
global geometry of the problem.

An example is the spatially homogeneous linear Boltzmann equation for
the velocity distribution of particles moving among background
particles.  Letting $(x,y) \to (v,v')$ denote the velocities after and
before collisions, the operator models collisions with background
particles at a specified time independent spatially uniform velocity
distribution.  In the simplest case, the collisions are elastic and
the background particles have a Maxwellian distribution with a given
temperature. Then the stationary measure is reversible. Another more
intricate case is when the collisions are inelastic and the background
particles come off with a non-Maxwellian distribution. The stationary
measure is then non-reversible.

\begin{remark} If $K(y,\dd x)$ is absolutely continuous with respect
  to $\dd \nu(x)$, with density $K(y,x)$, then reversibility for $\nu$
  means
  \[ K(x,y)=K(y,x),\quad \text{$\nu \otimes \nu$-almost surely}. \] In
  physical language, this means that the dynamics satisfy detailed
  balance with respect to the stationary measure $\nu$.
\end{remark}

We consider now a nonlinear function $\sigma : \R_+ \to \R_+$ that
is strictly monotonically increasing, and a finite measure $\nu$ on $\X$. We
define the corresponding \emph{nonlinear scattering equation}
describing a nonlinear Markov process:
\begin{equation}
  \label{eq:NLM}
  \frac{\partial \mu}{\partial t} = \cL_K \left( \sigma(f) \nu \right), \quad f :=
  \frac{\dd \mu}{\dd \nu}
\end{equation}
complemented with initial conditions $f_{|t=0} = f_0$. 

We introduce, similarly as before, for any $C^2$ function $\Phi : \R_+
\to \R_+$ that satisfies $\Phi(1)=\Phi'(1)=0$ and $\Phi'' \ge 0$, the
following functional
\begin{equation} \label{eq:HphiX}
  H_\Phi (f|f_\infty) := \int_\X \left( \int_{f_\infty(x)} ^{f(x)}
    \Phi'\left(\frac{\sigma(s)}{\sigma(f_\infty(x))} \right) \dd s\right) \dd
    \nu(x). 
\end{equation}

Let us prove
\begin{proposition}
  \label{prop:scattering}
  For any solution $\mu_t = f_t \nu \ge 0$, $f_t \in L^\infty$ to the
  nonlinear scattering equation \eqref{eq:NLM}, one has in the sense
  of distributions
\begin{multline*}
        \dt H_\Phi(f_t|f_\infty) \\ = 
         - \iint_{\X \times \X} \left[ (h(y) - h(x))
          \Phi'(h(y)) + \Phi(h(x)) - \Phi(h(y)) \right] \\ 
K(x,\dd y) 
        \sigma(f_\infty) \dd \nu(x) \le 0,  
\end{multline*}
with the notation
\begin{equation*}
  h := \frac{\sigma(f)}{\sigma(f_\infty)}. 
\end{equation*}

Moreover when furthermore $\nu$ is $K$-reversible: 
\[
K(x,\dd y) \dd \nu(x) = K(y,\dd x) \dd \nu( y),
\]
then $N_\Psi$, defined in \eqref{eq:Hphi}, satisfies
\begin{multline*}
  \dt N_\Psi(f_t|f_\infty) \\ = - \int_\X \left[ (g(y) - g(x))
    \Psi'(g(y)) + \Psi(g(x)) - \Psi(g(y)) \right] K(x,\dd y) \dd \nu(x) \le 0
\end{multline*}
with the notation
\begin{equation*}
  g := \sigma(f) - \sigma(f_\infty).
\end{equation*}

Note that the integrands of the right hand sides are non-positive due
to the convexity of $\Phi$ and $\Psi$. 
\end{proposition}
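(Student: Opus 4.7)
The plan is to differentiate $H_\Phi$ and $N_\Psi$ in time, substitute \eqref{eq:NLM} for $\partial_t\mu$, expand the generator via \eqref{LK}, and rearrange the resulting double integrals into the claimed two-point forms. Two algebraic identities will drive the entire computation. First, testing $\cL_K(\sigma(f_\infty)\nu)=0$ against an arbitrary function $\varphi$ and relabeling dummy variables yields the ``discrete integration-by-parts'' identity
\[
\iint_{\X\times\X}\bigl[\varphi(y)-\varphi(x)\bigr]K(x,\dd y)\,\sigma(f_\infty(x))\,\dd\nu(x) \;=\; 0,
\]
so any perfect two-point difference against $K(x,\dd y)\sigma(f_\infty(x))\dd\nu(x)$ can be discarded. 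Second, when $\nu$ is $K$-reversible, the measure $K(x,\dd y)\dd\nu(x)$ on $\X\times\X$ is symmetric under $x\leftrightarrow y$, so integrands antisymmetric in $(x,y)$ integrate to zero against it.

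For $H_\Phi$, the chain rule gives $\dt H_\Phi = \int_\X \Phi'(h(x))\,\dd\cL_K(\sigma(f)\nu)(x)$. Expanding $\cL_K$, relabeling $x\leftrightarrow y$ in the gain term, and using $\sigma(f)=h\sigma(f_\infty)$ yields
\[
\dt H_\Phi \;=\; \iint_{\X\times\X}\bigl[\Phi'(h(y))-\Phi'(h(x))\bigr]\,h(x)\,K(x,\dd y)\,\sigma(f_\infty(x))\,\dd\nu(x).
\]
A direct expansion shows that this integrand differs from the target $-\bigl[(h(y)-h(x))\Phi'(h(y))+\Phi(h(x))-\Phi(h(y))\bigr]$ exactly by the perfect difference $F(h(y))-F(h(x))$ with $F(z):=z\Phi'(z)-\Phi(z)$, which is killed by the first identity above. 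Non-positivity then follows from convexity of $\Phi$: the target integrand is the tangent-line residue of $\Phi$ at $h(y)$ evaluated at $h(x)$, hence nonnegative.

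For $N_\Psi$, the same differentiation combined with reversibility of $\nu$ (used to convert the gain term via $K(y,\dd x)\dd\nu(y)=K(x,\dd y)\dd\nu(x)$) gives
\[
\dt N_\Psi \;=\; \iint_{\X\times\X}\Psi'(g(x))\bigl[\sigma(f(y))-\sigma(f(x))\bigr]K(x,\dd y)\,\dd\nu(x).
\]
Decomposing $\sigma(f)=g+\sigma(f_\infty)$ produces a cross-term $\iint\Psi'(g(x))[\sigma(f_\infty(y))-\sigma(f_\infty(x))]K(x,\dd y)\dd\nu(x)$, which I would cancel by first applying reversibility to rewrite it, then invoking the first identity with $\varphi=\Psi'(g)$. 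For the remaining $I:=\iint\Psi'(g(x))[g(y)-g(x)]K(x,\dd y)\dd\nu(x)$, I would compute $I+R$, where $R$ is the target expression, and observe that the resulting integrand is antisymmetric under $x\leftrightarrow y$; reversibility then forces $I+R=0$. Non-positivity of the target integrand once again follows from convexity of $\Psi$.

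The only real obstacle is the algebraic identification of the auxiliary function $F(z)=z\Phi'(z)-\Phi(z)$ (and, in the $N_\Psi$ case, verifying that $I+R$ has an antisymmetric integrand); once these pieces are in place, the rest is routine expansion. A secondary conceptual subtlety is that the $N_\Psi$ argument genuinely uses both reversibility of $\nu$ \emph{and} stationarity of $\sigma(f_\infty)\nu$: in the boundary-driven open setting the stationary measure $\sigma(f_\infty)\nu$ is not itself reversible, so neither hypothesis alone suffices to kill the $f_\infty$ cross-term.
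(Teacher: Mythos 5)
Your proof is correct and, for the $H_\Phi$ part, essentially identical to the paper's: the paper expands $\cL_K$, relabels $x\leftrightarrow y$ in the gain term to reach the same intermediate integrand $h(x)\bigl[\Phi'(h(y))-\Phi'(h(x))\bigr]$, and then invokes $K$-stationarity of $\sigma(f_\infty)\nu$ through two separate identities (one for $z\Phi'(z)\circ h$, one for $\Phi\circ h$) that your single auxiliary function $F(z)=z\Phi'(z)-\Phi(z)$ packages into one perfect difference; the convexity argument for the sign is the same. For the $N_\Psi$ part your route is organized differently: the paper writes $\cL_K(\sigma(f)\nu)=\cL_K(g\nu)$ at the outset (linearity of $\cL_K$ plus stationarity of $\sigma(f_\infty)\nu$), which eliminates your cross-term before it ever appears, and then simply repeats the $H_\Phi$ computation with the reversibility of $\nu$ supplying the two symmetrization identities; you instead decompose $\sigma(f)=g+\sigma(f_\infty)$ after expanding, kill the cross-term by combining reversibility with stationarity, and close the main term by an antisymmetry-of-the-integrand argument. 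Both are valid; the paper's ordering is slightly more economical, while your version makes explicit the point you correctly flag at the end, namely that reversibility of $\nu$ and stationarity of $\sigma(f_\infty)\nu$ are both genuinely used. One small caveat: for your claim ``$I+R=0$ by antisymmetry'' to be right, $R$ must denote the double integral of the bracketed entropy-production integrand \emph{without} the leading minus sign (so that the conclusion is $\frac{\mathrm d}{\mathrm dt}N_\Psi=I=-R$); with $R$ taken as the full signed target the combination $I+R$ has a symmetric, generically nonzero part, so you should fix that labeling when writing it up.
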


\begin{proof}[Proof of Proposition \ref{prop:scattering}]
We first consider the case of the relative $\Phi$-entropy $H_\Phi$. 
We calculate the time derivative 
\begin{multline*}
  \dt H_\Phi(f_t|f_\infty) = \int_\X \Phi'(h(x)) \cL_K( \sigma(f) \nu) = 
\int_\X \Phi'(h(x)) \cL_K (h \sigma(f_\infty) \nu) \\ 
 = \iint_{\X
    \times \X} \Phi'(h(x)) K(y,\dd x) h(y) \sigma(f_\infty(y)) \dd
  \nu(y) \\ - \iint_{\X \times \X} \Phi'(h(x)) K(x,\dd
  y) h(x) \sigma(f_\infty(x)) \dd \nu(x) \\
 = \iint_{\X \times \X} h(x) \left( \Phi'(h(y)) - \Phi'(h(x)) \right)
 K(x,\dd y) \sigma(f_\infty(x)) \dd \nu(x).
\end{multline*}
Then we use the $K$-stationarity of $\sigma(f_\infty) \nu$ to deduce
\begin{multline*}
\iint_{\X \times \X} h(x) \Phi'(h(x)) K(x,\dd y)
\sigma(f_\infty(x)) \dd \nu(x) \\= 
\iint_{\X \times \X} h(y) \Phi'(h(y)) K(x,\dd y) \sigma(f_\infty(x))
\dd \nu(x)
\end{multline*}
and 
\begin{multline*}
\iint_{\X \times \X} \Phi(h(x)) K(x,\dd y)\sigma(f_\infty(x)) \dd
\nu(x) \\ = 
\iint_{\X \times \X} \Phi(h(y)) K(x,\dd y) \sigma(f_\infty(x)) \dd \nu(x).
\end{multline*}
This allows to rewrite the time derivative as 
\begin{multline*}
\dt H_\Phi(f_t|f_\infty) = - \iint_{\X \times \X} \Bigg[ \left( h(y) - h(x)
  \right) \, \Phi'(h(y)) \\+ \Phi(h(x)) - \Phi(h(y)) \Bigg] K(x,\dd
y) \sigma(f_\infty(x)) \dd \nu(x) \le 0
\end{multline*}
and concludes the proof. 

 We next consider the relative $\Psi$-entrophy $N_\Psi$. Arguing
 similarly we calculate
 \begin{multline*}
   \dt N_\Psi(f_t|f_\infty) = \int_\X \Psi'(g(x)) \cL_K( \sigma(f)
   \nu) =
   \int_\X \Psi'(g(x)) \cL_K (g \nu) \\
   = \iint_{\X \times \X} \Psi'(g(x)) K(y,\dd x) g(y) \dd \nu(y) \\ -
   \iint_{\X \times \X} \Psi'(g(x)) K(x,\dd
   y) g(x) \dd \nu(x) \\
   = \iint_{\X \times \X} g(x) \left( \Psi'(g(y)) - \Psi'(g(x))
   \right) K(x,\dd y) \dd \nu(x).
\end{multline*}
Then we use the $K$-reversibility of $\nu$ to deduce
\begin{equation*}
\iint_{\X \times \X} g(x) \Psi'(g(x)) K(x,\dd y) \dd \nu(x) = 
\iint_{\X \times \X} g(y) \Psi'(g(y)) K(x,\dd y) \dd \nu(x)
\end{equation*}
and
\begin{equation*}
  \iint_{\X \times \X} \Psi(g(x)) K(x,\dd y) \dd \nu(x) = \iint_{\X
    \times \X} \Psi(g(y)) K(x,\dd y) \dd \nu(x).
\end{equation*}
This allows to rewrite the time derivative as 
\begin{multline*}
\dt N_\Psi(f_t|f_\infty) = - \iint_{\X \times \X} \Bigg[ \left( g(y) - g(x)
  \right) \, \Psi'(g(y)) \\+ \Psi(g(x)) - \Psi(g(y)) \Bigg] K(x,\dd
y) \dd \nu(x) \le 0
\end{multline*}
and concludes the proof. 
\end{proof}

\begin{remark}
  In the case of the $\Phi$-entropy $H_\Phi$ with $\Phi(z)=z \ln z -z
  +1$ an alternative argument is the following: using the exchange of
  $x$ and $y$ we have
\begin{align*}
  \dt H_\Phi(f_t|f_\infty) & = \iint_{\X
    \times \X} \ln h(x) K(y,\dd x) h(y) \sigma(f_\infty(y)) \dd \nu(y) \\
  & \qquad \qquad - \iint_{\X \times \X} \ln h(x) K(x,\dd y) h(x)
  \sigma(f_\infty(x)) \dd
  \nu(x) \\
  & = - \iint_{\X \times \X} h(x) \ln \frac{h(x)}{h(y)}\,K(x,\dd
  y) \sigma(f_\infty(x)) \dd \nu(x)\\
  & = - H(F\pi | G \pi) \leq 0
\end{align*}
Here $H(F\pi | G \pi)$ is the relative entropy between $F\pi$ and $G\pi$ with
\[
\dd \pi(x,y) = K(x,\dd y)\sigma(f_\infty(x)) \dd \nu(x),  \quad 
F(x,y)=h(x), \quad G(x,y)=h(y).
\]
By $K$-stationarity of $\sigma(f_\infty) \nu$,
\begin{multline*} 
  \iint_{\X \times \X} F \dd \pi(x,y)  = \iint_{\X \times \X} h(x)
  K(x,\dd y) \sigma(f_\infty(x)) \dd \nu(x) \\
  = \iint_{\X \times \X} h(y) K(x,\dd y)\sigma(f_\infty(x)) \dd \nu(x) = \iint_{\X
    \times \X} G \dd \pi(x,y)
\end{multline*}
and since the measures $F\pi$ and $G \pi$ have same mass, their
relative entropy is non-negative. 
\end{remark}

\begin{remark} When $\sigma(f_\infty) \nu$ is $K$-reversible, there is
  a simpler proof in the case of the relative $\Phi$-entropy: since
  $K(x,\dd y)\sigma(f_\infty(x)) \dd \nu(x)$ is invariant under the
  exchange of $x$ and $y$, we have
\begin{multline*} 
\dt H_\Phi(f_t|f_\infty) = \\ -
\frac12 \iint_{\X \times \X} \bigl[h(x)-h(y)\bigr]\,\left[ \Phi'(h(x)) -
  \Phi'(h(y))\right] K(x,\dd y)\sigma(f_\infty(x)) \dd \nu(x)
\end{multline*}
which is obviously non-positive since $(a-b) (\Phi'(a) - \Phi'(b))\geq
0$ for all $a,b\geq 0$ due to the convexity of $\Phi$.
\end{remark}


\section{Microsopic heuristic derivation of relative entropies}
\label{subsec:heuristic}

In this section, we recall some facts on the zero range process
(ZRP) \cite{Spitzer, MR2145800} to provide a heuristic explanation for the
Lyapunov function \eqref{eq:Hphi}.  
The relation between large deviations and Lyapunov functionals has  been 
investigated in \cite{BDGJL, Maes, KL}.
In particular we stress the fact that the
Lyapunov function \eqref{eq:Hphi} was already computed in \cite{BDGJL}.
We will also recall the definitions of the multi-type zero range process 
which is related to the functional \eqref{eq: 2 species LD}
and of the Ginzburg-Landau dynamics which is associated to the functional
\eqref{eq:Hpsi}.

\subsection{Zero range process}

The ZRP is a lattice gas model with a conservative stochastic dynamics
\cite{KL} which we define below on the microsocopic domain $\Omega_N = \{
1,N \}^d$.  The microscopic jump rates of the dynamics are determined
by a function $\mathsf g : \bbN \to \bbR^+$ such that $\mathsf g(0) = 0$ and, for the
sake of simplicity, we assume that $\mathsf g$ is increasing. In the
following, $\mathsf g$ will be related to the diffusion coefficient $\gs$ of
the diffusion equation \eqref{eq:ibv1}.

To each site $i$ of $\Omega_N$, one associates an integer variable
$\eta_i$ which specifies the number of particles at this site. The
configuration $\eta(t) = \{ \eta_i (t) \}_{i \in \Omega_N}$ evolves
according to a stochastic dynamics.  Given $\eta(t)$ at time $t$, a
random variable $\tau_i$ with exponential rate $N^2 \mathsf g (\eta_i (t) )$ is
associated at each site.  Let $\tau = \min_i \tau_i$.  The
configuration remains unchanged until time $t + \tau$, then the site
which has the smallest $\tau_i$ is updated: a particle at site $i$
jumps randomly to one its neighbors, say $j$ and the new configuration
becomes
$$
\eta_i (t+\tau) = \eta_i (t) -1; \quad 
\eta_j (t+\tau) = \eta_j (t) +1; \quad
\eta_k (t+\tau) = \eta_k (t), \quad   k \not =  i,j\, . 
$$
Note that if $\eta_i = 0$ then $\tau_i = \infty$ because $ \mathsf g (0) =0$ and
therefore the site cannot be selected. After this update, new
variables $\tau_i$ are drawn with rates $N^2 \mathsf g (\eta_i (t+\tau) )$ and
the same rules apply to the next updates.  The case of independent
random walks is given by $\mathsf g(n) = n$: each walk evolves at
rate $1$ and therefore a site with $\eta_i$ particles will be updated
at rate $\eta_i$.

\subsection{The invariant measure}
\label{sec: The invariant measure}

The previous dynamics preserve the number of particles and describe an
isolated system with an explicit invariant measure.  Let $\mathsf m^\gl$ be
the probability measure on the integers given by
\begin{eqnarray}
\label{eq: mes micro ZR}
  \forall k \in \bbN^*, \qquad \mathsf m^\gl ( k) 
  = \frac{1}{Z_\gl} \;  \frac{\gl^k }{\mathsf g (1) \cdots \mathsf g (k)},
\end{eqnarray}
with $\mathsf m^\gl (0) = \frac{1}{Z_\gl} $ and the normalization
constant
\begin{eqnarray*}
Z_\gl = 1 + \sum_{k = 1}^\infty  \frac{\gl^k}{\mathsf g(1) \cdots \mathsf g(k)}.
\end{eqnarray*}
Define $\gp(\gl) = \ln Z_\gl$, then the mean density of particles in the stationary state is
obtained as
\begin{eqnarray}
\label{eq: lambda micro}
\bbE_{\mathsf m^\gl} \big( \eta \big) = \sum_{k=1} ^\infty \;  k \mathsf m^\gl ( k) = \gl \gp^\prime (\gl)  \, .
\end{eqnarray}
Thus any mean density $f$ can be recovered by tuning $\gl$
appropriately $\gl = \gl(f)$ such that $f = \bbE_{\mathsf m^\gl}
\big( \eta \big)$.  In the following, we will index the measure by its
mean density $f$ and use the notation $\mathsf m_f = \mathsf
m^{\gl(f)}$.  We shall see later that the conductivity $\gs (f)$  at density
$f$ can be interpreted as the expectation of $\mathsf g$
\begin{equation}
\label{eq: sigma micro}
\gs ( f) = \bbE_{\mathsf m_f} \big( \mathsf g( \eta) \big) = \gl(f).
\end{equation}
The product measure 
$$
\mathsf m_{f,N} (\eta) = \bigotimes_{i \in \Omega_N} \mathsf m_f (\eta_i)
$$
is invariant for the ZRP and the measure $\mathsf m_{f,N}$ conditioned to
a fixed number of particles is also invariant (see \cite{Spitzer}).

\subsection{Static large deviations}
\label{sec: Static large deviations}

Given the mean density $f_\infty \in (0,1)$, we compute now the large
deviations of the measure $\mathsf m_{f_\infty,N}$.  The parameter $\gl$
is determined by \eqref{eq: lambda micro} and the relation \eqref{eq:
  sigma micro} implies that $\gs(f_\infty) = \gl$.  We will first check
that
\begin{lemma}
\label{lem: total mean}
The large deviation function of the total density 
\[
S_N = \frac{1}{N^d} \sum_{i \in \Omega_N} \eta_i 
\]
is given by
\begin{equation}
\label{eq: LD mean}
\forall f \in [0,1], \qquad 
\lim_{\gep \to 0} \lim_{N \to \infty}
\; \frac{1}{N^d} \ln \bbP_{\mathsf m_{f_\infty,N}} \Big( S_N \in [f -\gep, f+\gep] \Big)
=  F(f | f_\infty),
\end{equation}
where 
\begin{equation}\label{Fra}
F(f | f_\infty) := \int_{f_\infty} ^f \ln \left( \frac{\sigma(s)}{\sigma(f_\infty)}
\right) 
  \dd s.
\end{equation}
\end{lemma}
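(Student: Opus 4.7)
The plan is to apply Cramér's large deviation theorem for sums of i.i.d.\ random variables, since under the product measure $\mathsf m_{f_\infty,N}$ the occupation variables $\{\eta_i\}_{i\in \Omega_N}$ are i.i.d.\ with common law $\mathsf m_{f_\infty}$, and $S_N$ is precisely their empirical mean. The rate function produced by Cramér's theorem is the Legendre transform of the log moment generating function, and the bulk of the work is to identify this transform with $F(f|f_\infty)$.

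First I would compute the log moment generating function $\Lambda(t) := \ln \bbE_{\mathsf m_{f_\infty}}\!\left[e^{t\eta}\right]$. Using the explicit form \eqref{eq: mes micro ZR} one sees that
\begin{equation*}
\bbE_{\mathsf m_{f_\infty}}\!\left[e^{t\eta}\right] \;=\; \sum_{k\geq 0} \frac{(e^t \gl_\infty)^k}{\mathsf g(1)\cdots \mathsf g(k)\, Z_{\gl_\infty}} \;=\; \frac{Z_{e^t \gl_\infty}}{Z_{\gl_\infty}},
\end{equation*}
where $\gl_\infty = \gl(f_\infty) = \gs(f_\infty)$ by \eqref{eq: sigma micro}. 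Hence $\Lambda(t)=\gp(e^t\gl_\infty)-\gp(\gl_\infty)$, with $\gp = \ln Z$, which is smooth and strictly convex on the interval where the partition function is finite.

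Second, I would Legendre-transform. By \eqref{eq: lambda micro}, $\Lambda'(t) = e^t\gl_\infty\, \gp'(e^t\gl_\infty) = \bbE_{\mathsf m^{e^t\gl_\infty}}[\eta]$, so the critical equation $\Lambda'(t)=f$ reads $e^t\gl_\infty = \gl(f) = \gs(f)$, giving $t^\ast(f)=\ln\!\bigl(\gs(f)/\gs(f_\infty)\bigr)$. Then the Cramér rate function
\begin{equation*}
I(f) \;=\; \sup_{t}\bigl\{tf-\Lambda(t)\bigr\} \;=\; t^\ast(f)\,f - \Lambda(t^\ast(f))
\end{equation*}
satisfies $I'(f)=t^\ast(f)=\ln(\gs(f)/\gs(f_\infty))$ by the envelope theorem, and $I(f_\infty)=0$ since $t^\ast(f_\infty)=0$. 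The function $F(\cdot|f_\infty)$ in \eqref{Fra} has the same derivative and the same value at $f_\infty$, so $I\equiv F(\cdot|f_\infty)$. This yields the two-sided bound \eqref{eq: LD mean} via the standard upper and lower bounds of Cramér's theorem (the lower bound needs only that $\Lambda$ is finite in a neighborhood of $t^\ast(f)$, which is continuous in $f$).

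The main technical obstacle will be ensuring that the moment generating function is finite on a neighborhood of the relevant $t^\ast(f)$, equivalently that $\gl(f) = \gs(f)$ lies in the interior of the domain $\{\gl : Z_\gl<\infty\}$ for all $f$ in the range considered. Under the standing assumption that $\mathsf g$ is increasing and under mild growth assumptions on $\mathsf g$ (so that $\gs$ is a bijection from densities to admissible fugacities), this holds throughout $[0,1]$; in the degenerate case where $\gl(f)$ approaches the boundary of this domain one recovers the familiar condensation regime, which would have to be excluded by hypothesis or handled separately.
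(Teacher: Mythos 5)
Your proposal follows essentially the same route as the paper: both compute the one-site log moment generating function $\gp(e^{\gga}\gl)-\gp(\gl)$ using the product structure, Legendre-transform it, and identify the optimizer via $\gs(f)=e^{\gga^*}\gl$ to integrate up to $F(f|f_\infty)$; your added care about finiteness of $Z_\gl$ near the boundary of the fugacity domain is a reasonable refinement of the paper's heuristic treatment. (Note only that Cram\'er's theorem gives $\tfrac{1}{N^d}\ln \bbP \to -I(f)$ with $I=F(\cdot|f_\infty)\geq 0$, so the sign convention in \eqref{eq: LD mean} as printed should carry a minus sign, consistent with \eqref{eq: LDP}.)
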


\begin{proof}
  The large deviation function can be obtained as the Legendre
  transform of the exponential moments \cite{DZ}
\begin{equation*}
  \psi(\gga) = \lim_{N \to \infty}
\; \frac{1}{N^d} \ln \bbE_{\mathsf m_{f_\infty,N}} \Big( \exp( \gga N^d \, S_N) \Big).
\end{equation*}
As $\mathsf m_{f_\infty,N}$ is a product measure, the previous expression
factorizes and reduces to the computation on one site
\begin{equation}
\label{eq: laplace transform}
\psi(\gga) = \ln \bbE_{\mathsf m_{f_\infty}} 
\Big( \exp( \gga \eta) \Big) = \gp(  \exp( \gga) \gl) - \gp(\gl).
\end{equation}
The large deviation function $\cF(f)$ is the Legendre transform of $\psi$
\begin{equation}
\cF(f) = \sup_\gga \Big\{ \gga f - [ \gp(  \exp( \gga) \gl) - \gp(\gl) ]  \Big\}.
\end{equation}
The supremum is reached for $\gga^*$ such that 
\begin{equation*}
  \cF^\prime (f) = \gga^*, \qquad  f = \exp( \gga^*) \gl \; \gp^\prime (  \exp( \gga^*) \gl).
\end{equation*}
The last equality combined with \eqref{eq: lambda micro} and
\eqref{eq: sigma micro} implies that $\gs(f) = \exp( \gga^*) \gl$.  As
$\gl = \gs(f_\infty)$, we deduce that 
\[
\cF (f) = \int_{f_\infty} ^f \ln \left( \frac{\gs(s) }{\gs(f_\infty)} \right) \dd s = F(f |
f_\infty).
\]
This completes \eqref{eq: LD mean}.
\end{proof}

Let us discretize the unit cube $\Omega = [0,1]^d$ in $\mathbb{R}^d$ with a mesh $1/N$ and
embed $\Omega_N$ in $\Omega$.  The local density of the microscopic system
can be viewed as an approximation of a density function $f(x)$ in
$\Omega$.  To make this quantitative, let us introduce $\pi$ the
empirical measure associated to the microscopic configuration $\eta =
\{ \eta_i\}_i$
\begin{equation}
\label{eq: empirical}
\pi = \frac{1}{N^d} \sum_{i \in \Omega_N} \eta_i \; \delta_{\frac{i}{N}} \, ,
\end{equation}
and we say that $\eta$ approximates the density profile $f(x)$ if
$\pi$ is close to $f(x) \dd x$ in the weak topology.  By abuse of
notation, $\dist(\eta, f)$ stands for the distance (for the weak
topology) between $\pi$ and $f(x) \dd x$.

The large deviations of the local density are given by
\begin{lemma}
\label{lem: LD open}
Let $f$ be a smooth density profile in $\Omega$ then
\begin{equation}
\label{eq: LDP}
\lim_{\gep \to 0} \lim_{N \to \infty} \; -  \frac{1}{N^d} 
\ln  \mathsf m_{f_\infty,N} \left( \big\{  \dist(\eta,f) \leq \gep  \big
  \} \right)  
= \int_{\Omega}  F\bigl(f(x)| f_\infty \bigr) \dd x.
\end{equation}
\end{lemma}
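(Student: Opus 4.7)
The plan is to derive the local large deviation estimate \eqref{eq: LDP} from the one-box Lemma~\ref{lem: total mean} by a \emph{coarse-graining} argument that exploits the product structure $\mathsf{m}_{f_\infty,N} = \bigotimes_{i \in \Omega_N} \mathsf{m}_{f_\infty}$. Tile the microscopic cube $\Omega_N$ by mesoscopic sub-boxes $\{B_k\}$ of side $\ell$ with $1 \ll \ell \ll N$; let $S^{B_k} := \ell^{-d}\sum_{i \in B_k} \eta_i$ and let $x_k$ denote the centre of $B_k$. The smoothness of $f$ ensures that $f$ is essentially constant on each $B_k$; since the weak topology is tested against Lipschitz functions, the event $\{\dist(\eta,f) \le \eps\}$ can be sandwiched between product cylinder events of the form $\bigcap_k\{|S^{B_k} - f(x_k)| \le \eps'\}$, with $\eps'$ chosen in terms of $\eps$, $\ell/N$, and the Lipschitz norm of $f$.

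For the upper bound on the probability, one replaces $\{\dist(\eta,f)\le\eps\}$ by a coarser product event $\bigcap_k\{|S^{B_k} - f(x_k)|\le \eps''\}$ chosen to contain it. Independence factorises the probability, and Lemma~\ref{lem: total mean} applied on each box gives
\[
\mathsf{m}_{f_\infty,N}\{\dist(\eta,f)\le\eps\} \;\le\; \prod_k \exp\bigl(-\ell^d [F(f(x_k)|f_\infty) - \delta_\ell]\bigr)
\]
for a rate $\delta_\ell \to 0$ (uniform in $k$ by continuity of $F(\cdot|f_\infty)$ on the compact range of $f$). Taking $-\log$, dividing by $N^d$, and noting that $\sum_k (\ell/N)^d F(f(x_k)|f_\infty)$ is a Riemann sum for $\int_\Omega F(f(x)|f_\infty)\,\dd x$ yields one direction of \eqref{eq: LDP}. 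The reverse direction is obtained symmetrically by restricting to a \emph{finer} product event $\bigcap_k\{|S^{B_k} - f(x_k)|\le \eps'''\}$ all of whose configurations satisfy $\dist(\eta,f)\le\eps$: independence, Lemma~\ref{lem: total mean}, and the Riemann sum produce the matching bound. Finally, sending $\eps\to 0$ after $N\to\infty$ absorbs the remaining $\delta$-slack thanks to the continuity of $g \mapsto \int F(g|f_\infty)$.

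The principal technical obstacle is the double scaling $\ell \to \infty$, $\ell/N \to 0$. One needs the one-box LDP of Lemma~\ref{lem: total mean} to hold with a speed that is \emph{uniform} in the target density over a compact subset of $(0,+\infty)$ containing the image of $f$, so that the box-by-box errors aggregate harmlessly into the Riemann sum; this uniformity follows from the smoothness and strict convexity of the Legendre transform $\gamma \mapsto \gp(e^{\gamma}\gl) - \gp(\gl)$ on that compact range, together with the continuity of $F(\cdot|f_\infty)$. A secondary technicality is the translation between weak-topology neighbourhoods of $f$ and box-average neighbourhoods: standard arguments approximate Lipschitz test functions by their averages over boxes of side $\ell/N$ with an error of order $\ell/N$ times the Lipschitz norm, so the two topologies become equivalent in the scaling $\ell/N\to 0$, and the choice of $\eps'$, $\eps''$, $\eps'''$ above can be made quantitative in terms of the modulus of continuity of $f$.
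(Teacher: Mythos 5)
Your proof is correct and takes essentially the same approach as the paper: the paper's own justification of this lemma is only a two-sentence sketch asserting that, by the product structure of $\mathsf m_{f_\infty,N}$, the local deviation costs computed as in Lemma~\ref{lem: total mean} ``add up'' to the integral over the domain. Your mesoscopic tiling, the sandwiching of the weak-topology neighbourhood between product cylinder events, and the Riemann-sum passage are exactly the standard way of making that sketch precise.
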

As $\mathsf m_{f_\infty,N}$ is a product measure, the local deviations of the
density in a sub-domain can be determined by the same computations as
in Lemma \ref{lem: total mean}.  The product structure of the measure
implies that the costs of the local deviations add up and therefore
the large deviation function is the integral of the local costs over
the domain.

Observe that the functional in \eqref{eq: LDP} is a particular case of
the Lyapunov functionals $H_\Phi$ introduced in \eqref{eq:Hphi}, when
$\Phi(z) = z \ln z -z +1$, and for Neumann or homogeneous Dirichlet
boundary conditions.

\subsection{Hydrodynamic limit}
\label{sec: Hydrodynamic limit}

We sketch below a heuristic derivation of the hydrodynamic limit for
the ZRP and refer the reader to \cite{KL} for the proofs.

The initial data is chosen such that the microscopic configuration
approximates a smooth macroscopic density profile $f_0(x)$ with $x
\in \Omega$.  For example, the initial data can be sampled from the
product measure
\begin{eqnarray}
\label{eq: initial data}
\mathsf m_{f_0,N} (\eta) = \bigotimes_{i \in \Omega_N} 
\mathsf m_{f_0(\frac{i}{N})}  (\eta_i).
\end{eqnarray}

As $f_0$ is not constant, this measure is not invariant for the
dynamics and the evolution of the local density can be recorded by
$\bbE_{\mathsf m_{f_0,N}} ( \eta_i (t))$. The microscopic evolution rules
lead to
\begin{equation}
\label{eq: hydro micro}
\partial_t \bbE_{\mathsf m_{f_0,N}} ( \eta_i (t)) =
\frac{N^2 }{2d} \sum_{j \sim i} \; \bbE_{\mathsf m_{f_0,N}} \big( \mathsf g( \eta_j (t)) \big) -  
\bbE_{\mathsf m_{f_0,N}} \big( \mathsf g( \eta_i (t)) \big),
\end{equation}
where the sum is over the sites $j$ which are neighbors to $i$. If
$\mathsf g$ is not linear the above equations are not closed and
cannot be solved exactly. However, the dynamics equilibrate very fast
locally and the density is the only slow mode. Thus one expects that
for $i \approx N x$, the local density $f(x,t)$ is the only relevant
parameter and
\begin{equation*}
  \partial_t f(x,t) = \partial_t \bbE_{\mathsf m_{f_0,N}} ( \eta_i (t)), \qquad
  \gs \big( f (x,t) \big) = \bbE_{\mathsf m_{f_0,N}} \big( \mathsf g( \eta_i (t)) \big),
\end{equation*}
where the conductivity $\gs$ is introduced in \eqref{eq: sigma micro}. The
microscopic equations \eqref{eq: hydro micro} can be understood as a
discrete Laplacian which approximates the macroscopic equation
\begin{equation}
\label{eq: hydro macro}
\partial_t f(x,t) = \Delta \gs \big( f (x,t) \big).
\end{equation}
Note that the microscopic density, given by $f( i/N , t)$,
is slowly varying so that the discrete Laplacian in \eqref{eq: hydro
  micro} is of order $1/N^2$ which is exactly compensated by the extra
factor $N^2$ of the microscopic jump rates.  We remark that the
rigorous derivation \cite {KL} of \eqref{eq: hydro macro} requires
further assumptions on the function $\mathsf g$.

In the derivation of \eqref{eq: hydro macro}, we have omitted the
boundary conditions for simplicity.  A more careful computation would
lead to Neumann conditions in order to take care of the fact that the
particles of the ZRP cannot exit the domain $\Omega_N$.

\subsection{Boundary conditions}

The hydrodynamic limit \eqref{eq: hydro macro} with Dirichlet boundary
conditions can be obtained by taking into account the contribution of
reservoirs of particles placed at the boundary of the domain.  This is
achieved at the microscopic level by introducing new exponential
random variables $\{T_i \}$ with rates $N^2 \gga_i$ for any site $i$
at the boundary of the domain $\Omega_N$.  The microscopic dynamics
evolve as described previously with updates according to the times
$\{\tau_i \} \cup \{T_i \}$. If an update occurs at a time $\tau_i$
for a site $i$ at the boundary, a particle jumps uniformly over the
nearest neighbors of $i$, if the jump occurs outside $\Omega_N$ the
particle is removed.  If an update occurs at a time $T_i$, a particle
is added at site $i$.  This mimics the role of reservoirs acting at
the boundary and maintaining locally a constant density of particles.
For any regular density $f_b(x)$ on the boundary $\partial \Omega$ of the
macroscopic domain $\Omega$, the parameters $\gga_i$ can be tuned such
that the hydrodynamic limit satisfies at any time $t>0$
\begin{equation}
\label{eq: hydro macro dirichlet}
 \forall \, x \in  \Omega, \qquad
\partial_t f(x,t) = \Delta \gs \big( f (x,t) \big); \qquad f_{|\partial \Om} = f_b
\end{equation}
which is the same as equations \eqref{eq:ibv1}-\eqref{eq:ibv2} when $A
= \mbox{Identity}$. 

For general boundary conditions $f_b$, a flux of particles is induced
by the reservoirs and the stationary state $f_\infty(x)$ is no longer
a constant but satisfies
\begin{equation}
\label{eq: stationary dirichlet}
 \forall \, x \in  \Omega, \qquad
\Delta \gs \big( f_\infty (x) \big) = 0; 
\qquad f_{|\partial \Omega} = f_b.
\end{equation}

For general microscopic dynamics maintained out of equilibrium by
reservoirs, the stationary measure is unknown as the reversibility of
the dynamics is broken. However for the ZRP, it has been found in
\cite{Demasi,BDGJL} that the invariant measure is a product measure
with a varying density
\begin{eqnarray}
\label{eq: invariant open}
\mathsf m_{f_\infty,N} (\eta) = \bigotimes_{i \in \Omega_N} \mathsf
m_{f_\infty( \frac{i}{N} )} (\eta_i),
\end{eqnarray}
where $f_\infty$ solves \eqref{eq: stationary dirichlet}. Thus the large
deviations for this new stationary measure can be obtained as in Lemma
\ref{lem: LD open}
\begin{lemma}
\label{lem: LD open 2}
Let $f$ be a smooth density profile in $\Omega$ then 
\begin{equation}
\label{eq: LDP open}
\lim_{\gep \to 0} \lim_{N \to \infty} \; -  \frac{1}{N^d} 
\ln  \mathsf m_{f_\infty,N} \left( \big\{  \dist(\eta,f) \leq \gep
  \big \} \right)  = 
\int_{ \Omega}  F\bigl(f(x)|f_\infty(x) \bigr) \dd x.
\end{equation}
\end{lemma}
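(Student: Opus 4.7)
The plan is to reduce Lemma \ref{lem: LD open 2} to the one-density computation of Lemma \ref{lem: total mean} by exploiting the fact that, by \eqref{eq: invariant open}, the stationary measure $\mathsf{m}_{f_\infty,N}$ is still a product measure, only with a site-dependent marginal $\mathsf{m}_{f_\infty(i/N)}$. Since the cost function in Lemma \ref{lem: total mean} is local and the measure tensorizes, the global cost should be the integral of local costs, provided we are careful with the spatial dependence of $f_\infty$.

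First, I would fix a mesoscopic scale $\ell = \ell(N)$ with $1 \ll \ell \ll N$ and partition $\Omega_N$ into boxes $\{B_\alpha\}$ of side $\ell$, indexed by their rescaled centers $x_\alpha \in \Omega$. On each box, the smoothness of $f$ and $f_\infty$ yields $f(y) = f(x_\alpha) + O(\ell/N)$ and similarly for $f_\infty$, so $\dist(\eta,f) \leq \gep$ forces the empirical average $S_\alpha := |B_\alpha|^{-1}\sum_{i \in B_\alpha} \eta_i$ to lie in a small neighborhood of $f(x_\alpha)$, with sizes controlled uniformly by $\gep$. The factorization
\[
\mathsf{m}_{f_\infty,N}(\eta) = \prod_\alpha \prod_{i \in B_\alpha} \mathsf{m}_{f_\infty(i/N)}(\eta_i)
\]
then allows the probability of the event $\{\dist(\eta,f) \leq \gep\}$ to be sandwiched between products of probabilities of events of the form $\{S_\alpha \in [f(x_\alpha) - \gep_\alpha, f(x_\alpha) + \gep_\alpha]\}$, where the $\gep_\alpha$ depend mildly on $\gep$ and $\ell/N$.

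Next, on each block $B_\alpha$ the local marginals $\mathsf{m}_{f_\infty(i/N)}$ for $i \in B_\alpha$ all have density within $O(\ell/N)$ of $f_\infty(x_\alpha)$. Replacing them (at the cost of a uniformly small error in the exponent) by the constant-density product measure $\mathsf{m}_{f_\infty(x_\alpha),\ell}$, the block probability can be estimated by Lemma \ref{lem: total mean}, giving
\[
-\frac{1}{\ell^d} \ln \mathsf{m}_{f_\infty(x_\alpha),\ell}\bigl(S_\alpha \in [f(x_\alpha)-\gep_\alpha, f(x_\alpha)+\gep_\alpha]\bigr) \;\longrightarrow\; F\bigl(f(x_\alpha)\,|\,f_\infty(x_\alpha)\bigr)
\]
after $N \to \infty$ and $\gep \to 0$. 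Summing over $\alpha$ then produces $\sum_\alpha (\ell/N)^d F(f(x_\alpha)|f_\infty(x_\alpha))$, which is a Riemann sum converging to $\int_\Omega F(f(x)|f_\infty(x))\dd x$ as $\ell/N \to 0$.

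The main technical obstacle is the interplay of the three scales $(N,\ell,\gep)$: one needs to justify that the Cramér estimate on each block is uniform in the block position (i.e.\ uniform over $f_\infty(x)$ ranging in a compact subset of $(0,+\infty)$ where the Legendre transform is smooth) and robust to small density perturbations of size $O(\ell/N)$. This requires a uniform version of the one-site Legendre computation \eqref{eq: laplace transform}, plus matching upper and lower bounds so that the two-sided large deviation identity is recovered. Once this uniformity is in place, the upper and lower bounds both converge to $\int_\Omega F(f(x)|f_\infty(x))\dd x$, proving \eqref{eq: LDP open}.
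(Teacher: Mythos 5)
Your proposal follows essentially the same route as the paper, which treats this lemma as an immediate consequence of the product structure of the stationary measure \eqref{eq: invariant open}: since $\mathsf m_{f_\infty,N}$ tensorizes over sites with slowly varying marginals, the local deviation costs add up and each local cost is supplied by Lemma \ref{lem: total mean}. The paper states this in two sentences and does not carry out the coarse-graining you describe; your block decomposition is the standard way to make that sketch precise, and the uniformity issues you single out (uniform Cram\'er estimates over $f_\infty(x)$ ranging in a compact subset of $(0,+\infty)$, robustness to $O(\ell/N)$ perturbations of the marginal density) are indeed the right ones.

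One scale choice deserves correction. For the upper bound, the containment of $\big\{\dist(\eta,f)\le\gep\big\}$ in an intersection of block events requires the blocks to be \emph{macroscopic}, i.e.\ of side $N/k$ with $k$ fixed as $N\to\infty$: the weak topology only controls averages of the empirical measure $\pi$ against fixed continuous test functions, and hence says nothing about the empirical average over a mesoscopic box of side $\ell$ with $\ell/N\to 0$. In the regime $1\ll\ell\ll N$ that you propose, a configuration can satisfy $\dist(\eta,f)\le\gep$ while individual block averages $S_\alpha$ fluctuate arbitrarily, so the sandwich fails on the upper-bound side. The fix is routine and does not change the conclusion: partition $\Omega$ into $k^d$ macroscopic blocks, obtain matching upper and lower bounds with errors $o_\gep(1)+O(1/k)$ using the smoothness of $f$ and $f_\infty$, and send $k\to\infty$ after $N\to\infty$ and $\gep\to 0$; the Riemann sum then converges to $\int_\Omega F\bigl(f(x)\,|\,f_\infty(x)\bigr)\dd x$ as claimed.
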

The functional in \eqref{eq: LDP open} coincides with the Lyapunov
functional $H_\Phi$ introduced in \eqref{eq:Hphi} with $\Phi(z) = z
\ln z -z +1$ for general Dirichlet boundary conditions, when $A =
\mbox{Identity}$ and the domain is an open set of $\R^d$.

\subsection{Asymmetric evolution}

Hydrodynamic equations with an asymmetry $E(x) = (E^{(1)}(x), \dots
, E^{(d)}(x))$, $x \in \Omega$, can be derived by modifying the
microscopic dynamics as follows.  The field $E(x)$ is chosen to be
smooth.  To each site $i$ of $\Omega_N$, one associates the slow
varying field 
\[
E_i = (E^{(1)}(i/N), \dots , E^{(d)}(i/N)).
\]
We introduce the normalization constants
$$
\forall \, i \in \Omega_N, \quad
Z_{i,N} = \sum_{\ell =1}^d  \exp \left( \frac{1}{2 N} E^{(\ell)}(i/N) \right)
+ \exp \left( - \frac{1}{2 N} E^{(\ell)}(i/N) \right).
$$
A particle at site $i$ will jump after a random exponential time with rate 
$N^2 \mathsf g (\eta_i (t)) Z_{i,N}$, but the jump is no longer uniform on the neighboring sites.  
A jump in the direction $\pm \vec{e}_\ell$ occurs with probability
$$
\frac{1}{Z_{i,N}} \; \exp \left( \pm \frac{1}{2 N} E^{(\ell)}(i/N)
\right).
$$
Note that the rates are weakly biaised by a factor of order $1/N$.

As a consequence the microscopic evolution equation \eqref{eq: hydro
  micro} becomes
\begin{multline*}
  \partial_t \bbE_{\mathsf m_{f_0,N}} ( \eta_i (t)) \\
  = N^2 \sum_{\ell = 1}^d \sum_{s = \pm 1} \exp \left( - \frac{s}{2 N}
    E^{(\ell)} \left( \frac{i + s \vec{e}_\ell}{N} \right) \right)
  \bbE_{\mathsf m_{f_0,N}} \left( \mathsf g( \eta_{i + s \vec{e}_\ell} (t)) \right) \\
  - \exp \left( \frac{s}{2 N} E^{(\ell)} \left(\frac{i}{N} \right)
  \right)
  \bbE_{\mathsf m_{f_0,N}} \left( \mathsf g( \eta_i (t)) \right)\\
  \simeq N^2 \sum_{\ell = 1}^d \sum_{s = \pm 1}
  \bbE_{\mathsf m_{f_0,N}} \left( \mathsf g( \eta_{i + s \vec{e}_\ell}
  (t)) \right)
  - \bbE_{\mathsf m_{f_0,N}} \left( \mathsf g( \eta_i (t)) \right) \\
   + \frac{N}{2} \sum_{\ell = 1}^d \left[ E^{(\ell)}
    \left(\frac{i - \vec{e}_\ell}{N} \right) \bbE_{\mathsf m_{f_0,N}}
    \left( \mathsf g( \eta_{i - \vec{e}_\ell} (t)) \right) - E^{(\ell)}
    \left(\frac{i+ \vec{e}_\ell}{N} \right) \bbE_{\mathsf m_{f_0,N}}
    \left( \mathsf g( \eta_{i + \vec{e}_\ell} (t)) \right) \right],
\end{multline*}
where the second equality is obtained by expanding the weak asymmetry
to first order in $1/N$.  In the discrete evolution equation, one can
identify discrete derivatives (which have a contribution of order
$1/N$) and a discrete Laplacian (which has a contribution of order
$1/N^2$).  The weak asymmetry has been tuned so that in the limit $N
\to \infty$, one gets as in \eqref{eq: hydro macro}
\begin{equation*}
\partial_t f(x,t) = \Delta \gs \big( f (x,t) \big) - \div \big( E (x) \gs \big( f (x,t) \big) \big).
\end{equation*}
We refer to \cite {KL} for a rigorous derivation.

\subsection{Multi-species ZRP}

In \cite{MR2145800} a generalization of the ZRP to two-species has
been proposed.  At each site $i$, we denote by $n_i$ the number of
particles of species $A$ and $m_i$ the number of particles of species
$B$.  The jump rates at site $i$ are now given by $\mathsf u
(n_i,m_i)$ for the species $A$ and $\mathsf v (n_i,m_i)$ for the
species $B$.  The conditions imposed on the rates for the stationary
measure to be factorised are
\begin{equation}
\label{eq: micro condition}
\frac{\mathsf u (n_i,m_i)}{\mathsf u (n_i,m_i-1)} = 
\frac{\mathsf v (n_i,m_i)}{\mathsf v (n_i - 1,m_i)} \, .
\end{equation}
At each site, the counterpart of the stationary measure \eqref{eq: mes
  micro ZR}, is now given by
\begin{eqnarray*}
  \forall \, k, \ell \in \bbN^*, \qquad \mathsf m^{\gl, \gga} (k,\ell) 
  = \frac{1}{Z_{\gl, \gga}} \;  \frac{\gl^k }{\mathsf u (1, \ell) \cdots \mathsf u (k, \ell)} \;
 \frac{\gga^k }{\mathsf v (0, 1) \cdots \mathsf v (0, \ell)},
\end{eqnarray*}
where $Z_{\gl, \gga}$ is a normalization factor.
The case of $k$-species ($k > 2$) can also be defined under similar assumptions (see \cite{MR2045171}).

Following the same heuristics as in section \ref{sec: Hydrodynamic
  limit}, one recovers the hydrodynamic equations \eqref{eq: coupled
  hydro} when the diffusion matrices are the identity:
\begin{equation}
\frac{\partial}{\partial t} \binom{f^1}{f^2} = \Delta \, \binom{\gs^1 (f^1,f^2)}{\gs^2 (f^1,f^2)}
\end{equation}
At the macroscopic level, the constraint \eqref{eq: micro condition}
leads to the condition \eqref{eq:symmetry1} on $\sigma_1, \sigma_2$
and the large deviation functional is given by \eqref{eq: 2 species
  LD} with $\Phi(z) = z\ln z - z +1$. A rigorous derivation of the
hydrodynamic equations (in the asymmetric regime) has been achieved in
\cite{MR2045171}.

\subsection{Ginzburg-Landau dynamics}

We now describe how the functional \eqref{eq:Hpsi} is related to the large deviation functional associated to the 
Ginzburg-Landau dynamics. 
Let $\Omega_N = \{1,N\}$ be the one-dimensional periodic domain and $V$ a strictly convex potential (growing fast enough at infinity).
The Ginzburg-Landau dynamics is acting on the continuous variables $\xi = \{ \xi_i \}_{i \in \Omega_N}
\in \bbR^{\Omega_N }$
\begin{equation}
\label{SDE}
\dd \xi_i (t) = \sum_{j =i \pm 1} \left( V^\prime(\xi_j) - V^\prime(\xi_i) \right)dt + \sqrt{2}  \left(\dd B_{(i,i+1)}(t)- \dd B_{(i-1,i)}(t)\right).
\end{equation}
where $(B_{(i,i+1)} (t))_{i \in \gL}$ denote independent standard Brownian motions associated to each edge. We consider periodic boundary conditions for the moment.

The invariant measures are products and can be encoded by the density $f$ as for the ZRP (see section \ref{sec: The invariant measure})
\begin{equation}
\label{eq: invariant GL}
\mu_{f,N} (\dd \xi) = \bigotimes_{i \in \Omega_N} \mu_{f} \big( \dd \xi_i), 
\quad \text{with} \quad 
\mu_{f} \big( \dd \xi) = \frac{1}{Z_{\lambda(f)} } \exp \big( - V(\xi) + \lambda(f) \xi \big) \dd \xi.
\end{equation}
where the Lagrange parameter $\lambda(f)$  is tuned such that the mean density under $\mu_f$ is equal to $f$. Define also as in \eqref{eq: sigma micro}
\begin{equation}
\label{eq: sigma micro 2}
\gs ( f) = \bbE_{\mu_f} \big( V^\prime ( \xi) \big) = \gl(f) \, ,
\end{equation}
where the last equality follows by integration by parts.
It is shown in \cite{GPV} that after rescaling the Ginzburg-Landau dynamics follows the hydrodynamic equation $\partial_t f(x,t) = \Delta \gs \big( f (x,t) \big)$ (see \eqref{eq: hydro macro}). 

\medskip

Using similar considerations as in section \ref{sec: Static large deviations}, one can show that 
the large deviation function for the measure $\mu_{f_\infty,N}$ \eqref{eq: invariant GL} with constant density $f_\infty$ is given by 
\begin{eqnarray}
\label{eq: GD GL}
\cG(f) = \int_0^1 \left( \int_{f_\infty}^{f(x)} \left[ \sigma(s) -
    \sigma(f_\infty) \right] \dd s\right) \dd x. 
\end{eqnarray}
One recognizes the $\Psi$-entrophy \eqref{eq:Hpsi} with $\Psi(x) = x^2/2$.

\smallskip

Finally  the Ginzburg Landau dynamics \eqref{SDE} can be modified to take into account 
boundary terms
\begin{equation*}
\dd \xi_i (t)  = \sum_{j =i \pm 1} \left( V^\prime(\xi_j) -
  V^\prime(\xi_i) \right) \dd t + \sqrt{2}  \left(\dd B_{(i,i+1)}(t)- \dd
  B_{(i-1,i)}(t)\right)
\end{equation*}
for $i \not = 1, N$, and 
\begin{equation*}
\begin{cases} \displaystyle 
\dd \xi_1 (t)  = \left(V^\prime(\xi_2) - V^\prime(\xi_1)\right)\dd t + \sqrt{2} \dd B_{(1,2)}(t) 
+ (a - V^\prime(\xi_1) ) \dd t +  \dd B_0(t),\vspace{0.2cm} \\
\displaystyle 
\dd \xi_N (t)  = \left( V^\prime(\xi_{N-1}) - V^\prime(\xi_N) \right)
\dd t - \sqrt{2} \dd B_{(N-1,N)}(t) \vspace{0.2cm} \\ \displaystyle 
\qquad \qquad \qquad \qquad \qquad \qquad \qquad + \left(b -  V^\prime(\xi_1) \right) \dd t +  \dd
B_N(t),
\end{cases}
\end{equation*}
where $B_0$ and $B_N$ are two additional independent brownian motions acting at the boundaries.
The reservoirs impose the chemical potentials $a$ and $b$ at the boundaries.

As for the ZRP in contact with reservoirs, the invariant measure
remains a product $\otimes_{i=1}^N \mu^{\lambda_i}$ with a linearly
varying chemical potential $\lambda_i = a + (b-a) i/(N+1)$ (note that
$\mu^{\lambda_i}$ is the measure \eqref{eq: invariant GL} indexed by
the chemical potential instead of the density).  For this reason, the
large deviation functional \eqref{eq: GD GL} can be generalized when
the invariant density profile is no longer constant.

\subsection{Large deviations and Lyapunov functions}

In the previous section, we recalled that the ZRP in contact with
reservoirs satisfies a diffusion equation \eqref{eq: hydro macro
  dirichlet} and its invariant measure obeys a large deviation
principle \eqref{eq: LDP open}.  We will now show  that the large
deviation function is a Lyapunov function for the limiting equation of
the process.  As shown in \cite{Maes}, this statement holds in a very
general setting (see also \cite{BDGJL} for more analytic arguments).
Thus we recall the proof in the case of a general particle system.

\medskip

Consider a microscopic Markovian dynamics with stationary measure
$\nu_N$.  As in \eqref{eq: empirical}, $\dist (f,\eta)$ denotes a
distance for the weak topology between a density profile $f$ and the
empirical measure associated to $\eta$.

We suppose that the microscopic system approximates the solution of a
PDE (which is assumed to be unique).  We now quantify the convergence to
the hydrodynamic limit.
For any density profile $f$, define the conditional measure
\begin{eqnarray*}
\nu_{f,\gep,N} =  \nu_N \left( \cdot \;  \Big| \;  \dist(\eta,f) \leq \gep \right)  \,.
\end{eqnarray*}
The initial data will be drawn from the measure $\nu_{f_0,\gep,N}$ for $\gep$ small enough and therefore 
concentrate to $f_0$ (note that this initial measure differs slightly from the one in \eqref{eq: initial data}).
For any smooth  initial data $f_0$, we assume that the microscopic
dynamics remain close to the macroscopic profile $f_t$ at  any time $t >0 $ 
\begin{eqnarray}
\label{eq: assumption dynamique}
\forall \,\gd >0, \qquad 
\lim_{\gep \to 0}  \lim_{N \to \infty} \bbE_{\nu_{f_0 ,\gep,N}}  \left(  \dist(\eta(t) ,f_t ) \leq \gd \right) = 1  \,,
\end{eqnarray}
where $\bbE_{\nu_{f_0 ,\gep,N}}$ stands for the expectation wrt the dynamics starting from an initial data sampled from
$\nu_{f_0 ,\gep,N}$.  

Finally, we assume that
$\nu_N$ obeys a large deviation principle with functional $\cG$
\begin{eqnarray}
\label{eq: LD maths 2}
\cG(f) =
\lim_{\gep \to 0} \lim_{N \to \infty} \; -  \frac{1}{N^d} \ln \; \nu_N \Big(  \dist(\eta,f) \leq \gep  \Big)  \,.
\end{eqnarray}

For the ZRP, the previous assumptions are satisfied with the
hydrodynamic limit \eqref{eq: hydro macro dirichlet} and the large
deviation principle \eqref{eq: LDP open}.
\medskip

\begin{proposition}
  Under assumptions \eqref{eq: assumption dynamique} and \eqref{eq: LD
    maths 2}, the large deviation functional $\cG$ is a Lyapunov
  function
\begin{eqnarray}
\label{eq: lyapu 01}
\forall \, t \geq s, \quad  \cG(f_s) \geq \cG(f_t)  \,.
\end{eqnarray}
\end{proposition}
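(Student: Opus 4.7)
The plan is to exploit the stationarity of $\nu_N$ together with the Markov property to compare the $\nu_N$-probability that $\eta$ is close to $f_t$ with the $\nu_N$-probability that $\eta$ is close to $f_s$. Since both asymptotics are governed by $\cG$ via \eqref{eq: LD maths 2}, this comparison will read exactly as $\cG(f_t)\leq \cG(f_s)$. The heuristic is that under $\nu_N$ no profile is ``reached'' at an exponential cost lower than its own $\cG$-value, and since the macroscopic flow deterministically sends $f_s$ to $f_t$, this transition cannot incur any additional exponential cost.

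Concretely, I would fix $t\geq s$ and, for arbitrary $\gep,\gep'>0$, use stationarity in the first equality and the Markov property restricted to the event $\{\dist(\eta(0),f_s)\leq \gep'\}$ in the inequality to write
\begin{multline*}
\nu_N\bigl(\{\dist(\eta,f_t)\leq\gep\}\bigr) \;=\; \bbP_{\nu_N}\bigl(\dist(\eta(t-s),f_t)\leq\gep\bigr) \\
\geq\; \nu_N\bigl(\{\dist(\eta,f_s)\leq\gep'\}\bigr)\cdot \bbP_{\nu_{f_s,\gep',N}}\bigl(\dist(\eta(t-s),f_t)\leq\gep\bigr),
\end{multline*}
where $\nu_{f_s,\gep',N}=\nu_N(\,\cdot\,|\dist(\eta,f_s)\leq\gep')$ is the conditional measure introduced before \eqref{eq: assumption dynamique}. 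By \eqref{eq: LD maths 2}, the left-hand side and the first factor on the right-hand side behave, on the exponential scale $N^d$, like $\exp(-N^d\cG(f_t))$ and $\exp(-N^d\cG(f_s))$ respectively. Assumption \eqref{eq: assumption dynamique}, applied with $f_s$ as the initial macroscopic profile (and using uniqueness of the limiting PDE to identify its value at time $t-s$ with $f_t$), forces the second factor to tend to $1$ as $N\to\infty$ and then $\gep'\to 0$.

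Taking $-\frac{1}{N^d}\log$ of the displayed inequality, letting $N\to\infty$ first, then $\gep'\to 0$, and finally $\gep\to 0$, I obtain $\cG(f_t)\leq \cG(f_s)$, which is \eqref{eq: lyapu 01}. The delicate point will be the bookkeeping of the order of limits: both \eqref{eq: assumption dynamique} and \eqref{eq: LD maths 2} are stated as iterated limits ($N\to\infty$ then the width parameter $\to 0$), so one must send $N\to\infty$ for each fixed $\gep,\gep'$ before shrinking these parameters, so that the contribution $-\frac{1}{N^d}\log\bbP_{\nu_{f_s,\gep',N}}(\cdots)$ is indeed negligible in the limit. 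Beyond this, the proof is an abstract consequence of stationarity, the Markov property, the LDP for $\nu_N$, and the deterministic character of the hydrodynamic limit; no specific feature of the ZRP or of the diffusion structure enters.
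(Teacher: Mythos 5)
Your argument is correct and is essentially the paper's own proof: both rest on the chain $\nu_N(\dist(\eta,f_t)\leq \gep) \geq \nu_N(\dist(\eta,f_s)\leq\gep')\,\bbP_{\nu_{f_s,\gep',N}}(\dist(\eta(t-s),f_t)\leq\gep)$, with stationarity identifying the left side, assumption \eqref{eq: assumption dynamique} making the conditional factor asymptotically harmless (the paper simply bounds it below by $1/2$), and the LDP \eqref{eq: LD maths 2} converting the inequality into $\cG(f_t)\leq\cG(f_s)$ after the same order of limits ($N\to\infty$ first, then the width parameters). The only cosmetic difference is that the paper states the estimate with $s=0$ and then ``repeats the argument at later times,'' whereas you apply the hydrodynamic assumption directly with $f_s$ as initial profile; both versions equally require $f_s$ to be admissible initial data for \eqref{eq: assumption dynamique}.
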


\begin{proof}
  Fix $\gd>0$. For $\gep >0$ small and   
$N$ large enough, assumption \eqref{eq: assumption dynamique} implies that
\begin{eqnarray*}
 \frac{1}{2} \nu_N \left(  \dist(\eta,f) \leq \gep \right) 
  \leq 
  \bbE_{\nu_N}  \left( \{ \dist(\eta(0) ,f ) \leq \gep \} \bigcap  \{ \dist(\eta(t) ,f_t ) \leq \gd \} \right) \,.
\end{eqnarray*}
Dropping the constraint on the initial data and using the fact that $\nu_N$
is the invariant measure
\begin{eqnarray*}
\frac{1}{2} \nu_N \left(  \dist(\eta,f) \leq \gep \right) 
\leq 
\bbE_{\nu_N}  \left(  \{ \dist(\eta(t) ,f_t ) \leq \gd \} \right) 
=  \nu_N \left(  \dist(\eta,f_t ) \leq \gd \right)  \,.
\end{eqnarray*}
From the large deviations \eqref{eq: LD maths 2}, we deduce that
$$
\forall \, t \geq 0, \qquad  \cG(f_0) \geq \cG(f_t) \, ,
$$
 by letting $\gep$ and $\gd$ go to 0.
 Repeating the argument at later times completes \eqref{eq: lyapu 01}.
\end{proof}

\medskip\par\noindent\emph{Acknowledgements.} This work started at a
meeting of the authors at IHES, where JLL was visiting E. Carlen and
M. Carvalho. CM also would like to thank J. Dolbeault for numerous
discussions on nonlinear diffusions, and for providing references on
this topic.  
We would also like to thank L. Bertini, D. Gabrielli and J. Lasinio for helpful comments.
The work of TB was supported by the ANR-2010-BLAN-0108.  The
work of JLL was supported by NSF grant DMR-1104501 and AFOSR grant
FA-9550-10-1-0131. The work of CM was supported by the ERC grant
MATKIT. \mk

\par\noindent{\scriptsize\copyright\ 2013 by the authors. This paper
  may be reproduced, in its entirety, for non-commercial purposes.}

\bibliographystyle{acm}
\bibliography{BLMV}

\signtb \signjl \signcm\signcv

\end{document}